\def\namedlabel#1#2{\begingroup
#2%
\def\@currentlabel{#2}%
\phantomsection\label{#1}\endgroup
}
\theoremstyle{theorem}
\newtheorem{theorem}{Theorem}[section]
\newtheorem{corollary}[theorem]{Corollary}
\newtheorem{lemma}[theorem]{Lemma}
\newtheorem{proposition}[theorem]{Proposition}
\newtheorem{theoremx}{Theorem}
\theoremstyle{definition}
\newtheorem{definition}[theorem]{Definition}
\newtheorem{definition-theorem}[theorem]{Definition/Theorem}
\newtheorem{example}[theorem]{Example}
\newtheorem{remark}[theorem]{Remark}
\numberwithin{equation}{subsection}
\newcommand{\m}{\mathfrak{m}}
\newcommand{\Hom}{\operatorname{Hom}}
\newcommand{\lHom}{\operatorname{Hom}^l}
\newcommand{\op}{\operatorname{op}}
\newcommand{\cx}{\operatorname{cx}}
\newcommand{\dd}{\operatorname{d}}
\newcommand{\reg}{\operatorname{reg}}
\newcommand{\Tor}{\operatorname{Tor}}
\newcommand{\h}{\operatorname{H}}
\newcommand{\tr}{\operatorname{Tr}}
\newcommand{\pre}{^{e}}
\newcommand{\excise}[1]{}
\newcommand{\ds}{\displaystyle}
\renewcommand{\a}{\mathfrak{a}}
\definecolor{blue-violet}{rgb}{0.54, 0.17, 0.89}
\definecolor{Blue}{rgb}{0.01, 0.28, 1.0}
\definecolor{gGreen}{rgb}{0.2, 0.8, 0.2}
\definecolor{Green}{rgb}{0.04, 0.85, 0.32}
\begin{document}

\title{The Frobenius Exponent of Cartier Subalgebras}

\author{Florian Enescu}
\author{Felipe P\'erez}
\address{Department of Mathematics and Statistics, Georgia State University, 14th floor, 25 Park Place, Atlanta, GA 30303}
\email{fenescu@gsu.edu, felipe.perez@zerogravitylabs.ca}
\maketitle

\begin{abstract}
Let $R$ be a standard graded finitely generated algebra over an $F$-finite field of prime characteristic, localized at its maximal homogeneous ideal.  In this note, we prove that that Frobenius complexity of $R$ is finite. Moreover, we extend this result to Cartier subalgebras of $R$.
\end{abstract}

\section{Introduction}
Let $R$ be a local ring of positive characteristic $p>0$. A Cartier operator of order $e$ on $R$, also known as {$p^{-e}$-linear map (or operator), is an additive map $\phi:R\to R$ such that $\phi(r^{p^e}s)=r\phi(s)$ for all $r,s \in R$. These maps have been widely studied in the recent years because of their connections to  tight closure theory in commutative algebra and their applications to the study of singularities in algebraic geometry. See for example  \cite{TestQGor,BST-F-signature-of-pairs,BlickleTestIdealsViaAlgebrasOfP-eLinearMaps} or \cite{BlickleP-1LinearMaps} for an excellent survey.

Let $\mathcal{C}^R_e$ denote the set of $p^{-e}$-linear maps on $R$. For a $p^{-e}$ linear map $\phi$ and a $p^{-e'}$-linear map $\psi$, the map $\phi \psi$ is $p^{-(e+e')}$-linear. In particular the abelian group $ \mathcal{C}^R=  \oplus_{e\geq 0} \mathcal{C}^R_e$ has a natural structure of graded ring, with multiplication given by composition of maps. A \emph{Cartier subalgebra on $R$} is a graded subring $\mathcal{D}\subseteq \mathcal{C}^R$, such that $\mathcal{D}_0=R$ and $\mathcal{D}_e \neq 0$, for some $e >0$. Cartier subalgebras appear naturally in the contexts of ideal pairs and divisor pairs \cite{H-Y,HaraWatanabe}, concepts widely used in algebraic geometry.

The Cartier subalgebras on $R$ are rarely commutative or finitely generated which make them difficult to study. One is interested in understanding the ``size'' of the Cartier subalgebras. In \cite{BST-F-signature-of-pairs} this is done by considering the growth of  the number of splitting in each degree $e$. This led to the definition of the $F$-signature of the pair $(R,\mathcal{D})$. The fact, shown in loc. cit., that the $F$-signature of the pair is finite implies that the number of splitting is understood asymptotically. In this note, we are interested in another invariant that measures the abundance of Cartier maps, that is the complexity of the Cartier subring $(R,\mathcal{D})$. We will introduce this concept in this paper along the lines of the Frobenius complexity for a local ring defined in~\cite{EnescuYao}.

Let $A$ be a graded ring, not necessarily commutative, and let $X$ be a set of homogeneous elements in $A$ that minimally generates $A$.  The complexity of $A$ is the growth rate of the elements in $X$ of degree $e$, as $e$ varies (see \cite{EnescuYao} or Section \ref{sec:Background}). This invariant was introduced by the first author and Yao in loc. cit., where the authors studied the complexity of the ring of Frobenius operators on the injective hull of a local ring $(R,\m)$. The complexity appears as a new invariant associated to singularities and there are still many open questions related to it. One such important question regards its finiteness. In this note, we establish the finiteness of this number in the graded setting. 

\begin{theoremx}[Theorem \ref{thm: main}]
Let $R$ be a finitely generated standard graded $k$-algebra of positive characteristic localized at its homogeneous maximal ideal. Then the Frobenius complexity of $R$ is finite.
\end{theoremx}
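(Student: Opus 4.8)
The plan is to understand the structure of the ring of Cartier operators $\mathcal{C}^R = \bigoplus_{e \geq 0} \mathcal{C}^R_e$ well enough to bound the growth of minimal generators in degree $e$. The key point is that in the standard graded setting, each $\mathcal{C}^R_e$ is itself a graded module, and one can hope to describe $\mathcal{C}^R_e$ in terms of Frobenius pushforwards of $R$ together with the Frobenius-linear structure. Concretely, I would first identify $\mathcal{C}^R_e$ with an appropriate Hom group, along the lines of $\mathcal{C}^R_e \cong \Hom_R(F^e_* R, R)$ (using $F$-finiteness so these are finitely generated), and track the internal grading carefully, since the multiplication $\mathcal{C}^R_{e'} \times \mathcal{C}^R_e \to \mathcal{C}^R_{e+e'}$ will interact with degrees in a way governed by the Frobenius twist (a degree-$d$ element of $F^e_* R$ corresponds to a genuine degree $d/p^e$, so there is a predictable rescaling).

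Next I would set up the complexity as the growth rate of the number (or the degrees) of minimal homogeneous generators of $\mathcal{C}^R$ as a (non-commutative) algebra over $\mathcal{C}^R_0 = R$. The strategy is to bound $\mathcal{C}^R_e$ from above by a quantity that grows at most like $C^e$ for some constant $C$, so that the exponential growth rate is finite. The natural source of such a bound is that $\Hom_R(F^e_* R, R)$, as a graded $R$-module, has its generators concentrated in a range of internal degrees whose width grows only linearly in $p^e$ (because $F^e_* R$ is generated in a window of size comparable to $p^e \cdot (\text{top degree of } R)$), while the relevant Hilbert-function counts grow polynomially in the degree window; taking logarithms and dividing by $e$ then yields a finite limsup.

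The heart of the argument, and the step I expect to be the main obstacle, is controlling which elements of $\mathcal{C}^R_e$ are genuinely \emph{new} generators versus those obtained by composing lower-order operators, i.e.\ actually estimating the minimal generators rather than all of $\mathcal{C}^R_e$. A crude bound on $\dim_k \mathcal{C}^R_e$ alone is not enough, since the complexity measures only the generators not already in the subalgebra generated by earlier degrees; one must show that the decomposable part $\sum_{0 < e' < e} \mathcal{C}^R_{e'} \cdot \mathcal{C}^R_{e - e'}$ is large enough that the leftover indecomposable part still has sub-exponential-rate (finite complexity) growth. I would handle this by exhibiting enough multiplicative relations coming from the standard graded structure — in particular, using that $R$ is generated in degree one so that the Cartier operators in consecutive orders are tightly linked — to force the number of minimal generators in degree $e$ to be bounded by an expression of the form $c_1 p^{e \cdot d}$ for fixed constants, which gives finite complexity after taking $\limsup_e \frac{1}{e}\log_p(\#\text{generators in degree } e)$.

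Finally, I would assemble these pieces: fix the $F$-finite base field and the standard graded presentation, invoke the $\Hom$-description and $F$-finiteness to guarantee finite generation in each degree, establish the exponential upper bound on the generating sets via the degree-window and Hilbert-function estimates, and conclude that the defining $\limsup$ is finite. The extension to Cartier subalgebras $\mathcal{D} \subseteq \mathcal{C}^R$ should then follow formally, since a graded subalgebra has minimal generators that inject (degreewise) into a bounded modification of the ambient generators, so finiteness of the ambient complexity controls that of the subalgebra.
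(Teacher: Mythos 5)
Your outline has the right global shape and in fact parallels the paper's strategy: identify $\mathcal{C}^R_e\cong\lHom_R({}^eR,R)$, bound the number of new algebra generators in degree $e$ by the number of \emph{module} generators of $\mathcal{C}^R_e$ over $R$, and estimate that count by a degree-window times a Hilbert-function (length) bound. One structural comment before the main objection: what you call the heart of the argument --- lower-bounding the decomposable part $\sum_{0<e'<e}\mathcal{C}^R_{e'}\cdot\mathcal{C}^R_{e-e'}$ so that the indecomposable leftover is small --- is not needed at all. Proposition~\ref{prop:c_e < l} of the paper shows that $k_e-k_{e-1}\leq\min\{\mu^l_{R}(\mathcal{C}^R_e),\mu^r_{R}(\mathcal{C}^R_e)\}$: the crude one-sided generator count of the \emph{entire} degree-$e$ piece already dominates the complexity sequence, so any fixed-base exponential bound on $\mu^r_R(\mathcal{C}^R_e)$ finishes the proof. (Also, the relevant count must be of $R$-module generators; $\dim_k\mathcal{C}^R_e$ is infinite, so that quantity is not even the right crude bound to discuss.)

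The genuine gap is your key estimate. You assert that the generators of $\Hom_R(F^e_*R,R)$ sit in a degree window of width $\mathcal{O}(p^e)$ ``because $F^e_*R$ is generated in a window of size comparable to $p^e$.'' That inference is invalid: generators of a Hom module are controlled by the syzygies/regularity of the source, not by its generating degrees. Concretely, via Fedder's isomorphism $\mathcal{C}^R_e\cong (I^{[p^e]}\colon I)/I^{[p^e]}$ (with the premultiplication structure), so what must be bounded is the generating degree $\dd\left((I^{[p^e]}\colon I)\right)$ of the colon ideal. The bound $\dd\left((I^{[p^e]}\colon I)\right)=\mathcal{O}(p^e)$ that your argument implicitly assumes is precisely the statement the paper flags in its closing remark as \emph{expected but unproven} (it would give $\exp_F(R)\leq\dim R$). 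What is actually available, and what the paper uses, is the Katzman--Zhang regularity argument (Proposition~\ref{prop: reg of quotient reg colon}) combined with the characteristic-free bound $\reg(J)\leq(2\dd(J))^{2^{n-2}}$ of Remark~\ref{rmk: bound on regularity}, yielding only $\dd\left((I^{[p^e]}\colon I)\right)\leq\reg\left((I^{[p^e]}\colon I)\right)=\mathcal{O}\left(p^{2^{n-2}e}\right)$; this has a much worse base but is still exponential with a \emph{fixed} base, which is all that finiteness requires (Theorem~\ref{thm: main1} then gives $\exp_F\leq 2^{n-2}\dim R$). Without this substitute input your proof does not close. A secondary error: your final claim that the Cartier-subalgebra case follows ``formally'' because subalgebra generators inject into ambient ones is false --- graded subalgebras of a finite-complexity algebra need not inherit any such control, which is exactly why the paper imposes the bounded generating degree growth hypothesis (Definition~\ref{bounded}) on $\mathcal{D}$ and proves the subalgebra statement only under it.
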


As mentioned earlier, we extend the notion of Frobenius complexity to Cartier subalgebras. We define the Frobenius exponent for a Cartier subalgebra $(R,\mathcal{D})$, and we show that, for Cartier subalgebras of bounded generating degree growth, the Frobenius exponent of $(R,\mathcal{D})$ is finite, see Definition~\ref{bounded} and Theorem~\ref{thm: main}. Both of our results, Theorem \ref{thm: main} and Corollary \ref{cor: main}, are effective. 

In Section 2 of the paper, we recall and collect the basic definitions and results we need, while the main results are in Section 3.

\section{Background} \label{sec:Background}

For a ring of prime characteristic $p >0$, there is a natural map, $F:R\to R$, namely the Frobenius endomorphism, given by $r\mapsto r^p$. Similarly, for $e\geq 0$, we have the iterated Frobenius endomorphism $F^e:R\to R$. We say that $R$ is is $F$-finite when $F$ is a finite ring homomorphism. Otherwise stated, we assume $R$ to be $F$-finite.

For any $R$-module $M$ the $e$th iterated Frobenius endomorphism induces a second $R$-module structure on $M$ via scalar restriction. Let $\pre M$ denote the $R$-$R$-bimodule whose element set is $M$, and has $R$-$R$-bimodule structure given by 
\[r\cdot m\cdot s=  r^qsm\]
for $r,s$ in  $R$ and $m$ in $\pre M$. Equivalently, the left $R$-module structure of  $\pre M$ is given via the $e$the iterated Frobenius map on $R$, meanwhile the right one is the usual action of $R$ in $M$. We define $M^{e}$ in a similar fashion, with $R$-$R$-bimodule structure given by \[r\cdot m\cdot s=rs^qm\] for $r,s\in R$ and $m\in M^{e}$.

\subsection{The total Cartier algebra}

For some time now, there has been interest in the $p^{-e}$-linear maps on $R$, since they carry a great deal of information about the singularities of the ring $R$ in prime characteristic $p$. We recall the basic definitions and results related to these maps, following ~\cite{BST-F-signature-of-pairs} as our main reference, while noting that some of these definitions were introduced earlier in ~\cite{TestQGor}.

A $p^{-e}$-linear map on $R$ is by definition an $R$-linear map $^{e}R \to R$. We denote the collection of $R$-linear maps from $^{e}R$ to  $R$ by $\mathcal{C}^R_e$, that is  \[\mathcal{C}^R_e=\lHom_R\left({^{e}R}, R\right).\]
Observe that the abelian group \[ \mathcal{C}^R :=\bigoplus_{e\geq 0} \mathcal{C}_e^R=\bigoplus_{e\geq 0} \lHom_R(^{e}R,R)\] has a natural structure of a non-commutative graded ring, where multiplication is given by composition; given $\phi:{^{e}R}\to R$ and $\psi:{^{e'}R}\to R$, we let $\phi\psi={^{e'}\phi}\circ \psi:{^{e+e'}R}\to R$. This ring is called the {\it total Cartier algebra} on $R$.

\subsection{The ring of Frobenius operators} A dual concept to that of a Cartier map (or Cartier operator) is that of a Frobenius operator. Let $(R,\m)$ be a local commutative ring of positive characteristic $p$, and $M$ be an $R$-module. The module of Frobenius operators of order $e$-th on $M$ is defined as the $R$-module \[\mathcal{F}^e(M)=\Hom_R(M,{^{e}M}).\] In an analogous way to the total Cartier algebra on $R$, the abelian group $ \mathcal{F}(M)=\oplus_{i=0}\mathcal{F}^i(M)$ has a natural graded ring structure, with multiplication given by composition. The ring $\mathcal{F}(M)$ is  called \emph{the ring of Frobenius operators on $M$}.  

The Frobenius operators of order $e$ satisfy a Fedder's type of isomorphism.

\begin{proposition}[{\cite[Proposition 3.6]{KatzmanSchwedeSinghZhang},\cite[Proposition 3.36]{B-IntersectionHomologyDModulesInFiniteChar}}] \label{prop: Frob operators equal colon}
Let $R$ be a complete local ring such that $R=S/I$ where $S$ is a complete, regular, local ring of positive characteristic. If $E=E_R(k)$ denotes the injective hull of the residue field of $R$, then there is an isomorphism of left $R$-modules \[\mathcal{F}^e(E)\cong \frac{(I^{[p^e]}:I)}{I^{[p^e]}}.\]

\end{proposition}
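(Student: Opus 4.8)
The plan is to prove this Fedder-type isomorphism $\mathcal{F}^e(E) \cong (I^{[p^e]}:I)/I^{[p^e]}$ by unwinding the definition $\mathcal{F}^e(E) = \Hom_R(E, {^e E})$ and transporting everything to the regular ring $S$, where Frobenius behaves well because $S$ is regular (so $F$ is flat and $^e S$ is a free, or at least faithfully flat, $S$-module). First I would realize the injective hull $E = E_R(k)$ concretely. Writing $R = S/I$ with $(S,\n)$ complete regular local, there is a canonical identification $E_R(k) \cong \operatorname{ann}_{E_S(k)}(I) = (0:_{E_S}I)$, where $E_S = E_S(k)$ is the injective hull over $S$. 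Since $S$ is a complete regular local ring, $E_S$ has a very explicit description (essentially an inverse-polynomial/inverse-limit module), and crucially $E_S$ carries a canonical Frobenius action; in fact $\mathcal{F}^e(E_S) \cong S$ as the Frobenius operators on the injective hull over a regular ring are free of rank one, generated by the natural Frobenius. This regular-ring computation is the backbone of the argument.

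The key steps, in order, would be: \emph{(1)} Express ${^e E}$ and reduce the Hom. Using $R$-module structures, I would show $\Hom_R(E, {^e E})$ can be computed after applying Matlis duality and the adjunction between restriction/extension of scalars along Frobenius. Concretely, I expect to use that $^e(-)$ and $\Hom_R(-,E)$ interact via $\Hom_R(M, {^e N}) \cong {}^e\Hom_R({^e M}, N)$-type adjunctions, together with Matlis duality $\Hom_R(-,E)$ sending finitely generated modules to Artinian ones and vice versa. \emph{(2)} Pass to $S$. Write every $R$-linear map $E \to {^e E}$ as an $S$-linear map and use the identification $E = (0:_{E_S} I)$ to get that such maps correspond to $S$-linear (equivalently $p^{-e}$-linear on $S$) maps that carry the submodule annihilated by $I$ into the submodule annihilated by $I^{[p^e]}$ after the Frobenius twist. \emph{(3)} Identify the answer with a colon ideal. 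Because $\mathcal{F}^e(E_S) \cong S$ is free of rank one over the regular ring, every Frobenius operator on $E_S$ is multiplication (by $F^e$) by a fixed element of $S$; the compatibility condition that such an operator descends to an operator on the quotient $E = (0:I)$ (equivalently, preserves the $R$-structure rather than merely the $S$-structure) translates precisely into the requirement that the multiplier lie in $(I^{[p^e]}:_S I)$, while two multipliers give the same operator on $E$ exactly when they differ by an element of $I^{[p^e]}$. This yields the bijection $\mathcal{F}^e(E) \cong (I^{[p^e]}:I)/I^{[p^e]}$, and one checks it is $R$-linear.

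I expect the main obstacle to be Step (3): pinning down exactly how the ``descent from $S$ to $R = S/I$'' condition becomes the colon $(I^{[p^e]}:I)$, and verifying that the correspondence is well-defined and injective modulo $I^{[p^e]}$. The subtlety is bookkeeping the two different $R$-module structures (the left structure via Frobenius $F^e$ and the ordinary right structure) and confirming that the colon-ideal condition $u \cdot I \subseteq I^{[p^e]}$ is precisely what guarantees the $S$-linear Frobenius operator $E_S \to {^e E_S}$ restricts to a well-defined map on the annihilator submodule $E = (0:_{E_S} I)$ landing in $^e E$. A secondary technical point is justifying the freeness $\mathcal{F}^e(E_S) \cong S$ over the regular ring, but this is standard (it follows from flatness of Frobenius and the self-duality of $E_S$ under the regular-ring Frobenius), so I would cite it rather than reprove it. Once these structural identifications are in place, the final isomorphism follows by assembling the pieces, and since both of the cited references establish exactly this statement, I would present the argument as a streamlined reconstruction of their proof rather than a wholly new one.
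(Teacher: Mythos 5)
First, a point of reference: the paper gives no proof of this proposition at all --- it is imported from the cited sources (\cite[Proposition 3.6]{KatzmanSchwedeSinghZhang}, \cite[Proposition 3.36]{B-IntersectionHomologyDModulesInFiniteChar}), so your attempt can only be compared against the standard argument of those references. Your skeleton is indeed that argument: realize $E=\ann_{E_S}(k)$... more precisely $E=\ann_{E_S(k)}(I)$ inside $E_S=E_S(k)$, use that $\mathcal{F}^e(E_S)\cong S$ is free of rank one on the natural Frobenius $F^e$, and show that $uF^e$ restricts to an operator on $E$ exactly when $u\in (I^{[p^e]}:I)$, with two multipliers agreeing on $E$ exactly when they differ by an element of $I^{[p^e]}$.

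However, as written there is a genuine gap in the surjectivity direction. Steps (2)--(3) only construct a map $(I^{[p^e]}:I)/I^{[p^e]}\to \mathcal{F}^e(E)$, $u\mapsto uF^e|_E$: you analyze which operators on $E_S$ restrict to $E$, but you never argue that \emph{every} element of $\mathcal{F}^e(E)=\Hom_R(E,{}^{e}E)$ arises as such a restriction. The required step is to extend an arbitrary $S$-linear map $E\to {}^{e}E_S$, defined only on the submodule $E\subseteq E_S$, to an $S$-linear map $E_S\to {}^{e}E_S$; the tool for this is that ${}^{e}E_S$ is an \emph{injective} $S$-module, which holds because Frobenius on the regular ring $S$ is flat (Kunz) and restriction of scalars along a flat map preserves injectivity. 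Your step (2), ``write every $R$-linear map $E\to{}^{e}E$ as an $S$-linear map,'' is mere restriction of scalars along $S\onto R$ and does not accomplish this; without the extension step you obtain only an injection of the colon quotient into $\mathcal{F}^e(E)$, not an isomorphism. A second omission: both the kernel computation ($uF^e|_E=0$ iff $u\in I^{[p^e]}$) and the ``only if'' half of the colon condition require the fact that the $S$-span of $F^e\left(\ann_{E_S}(I)\right)$ is all of $\ann_{E_S}(I^{[p^e]})$, combined with Matlis duality ($\ann_S\ann_{E_S}(J)=J$ for complete $S$); without it you get containments, not equalities. Finally, minor slips: maps $E\to{}^{e}E$ are $p^e$-linear (Frobenius-type), not ``$p^{-e}$-linear''; $E=\ann_{E_S}(I)$ is a submodule of $E_S$ (the Matlis dual of the quotient $S/I$), not a quotient; and the adjunction proposed in step (1), $\Hom_R(M,{}^{e}N)\cong {}^{e}\Hom_R({}^{e}M,N)$, is not correct as stated, though step (1) is dispensable for the argument anyway.
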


When Proposition \ref{prop: Frob operators equal colon} is combined with Fedder's isomorphism (\cite[Lemma 1.6]{FedderFputityFsing}) one gets 

\begin{corollary}[see~\cite{AlvBoixZar}] \label{cor:frob comple = comple cartier}
Let $R$ be an $F$-finite complete local ring. There is an isomorphism of $R$-$R$-bimodules 
\[ \mathcal{F}^e(E)=\Hom_R(E,{^{e}E}) \cong \left(\Hom_R({^{e}R,R})\right)^{\op}=(\mathcal{C}_e^R)^{\op},\] where $(-)^{\op}$ represents that the bimodule structure is reversed. 

\end{corollary}

\subsection{The description of $p^{-e}$-linear maps in our setting}  Let $k$ be an $F$-finite field, $S=k[x_1,\ldots,x_n]_{(x_1,\ldots,x_n)}$ and $I$ an ideal in $S$. It is known that there exists a generator of $\Hom_S(^{e}S, S)$ as an $^{e}S$-module (via premultiplication), by \cite[Lemma 1.6]{FedderFputityFsing}, so let us denote this map by $\tr$, by analogy to case when $k$ is perfect.

In the case $k$ is perfect, one can take this generator to be the so called trace map, customarily denoted by $\tr$, which can be described as follows:

Assume $k$ is perfect. Let $S=k[x_1,\ldots,x_n]_{(x_1,\ldots,x_n)}$, then the $S$-module ${^eS}$ is a free module with basis consisting of the monomials $x_1^{a_1}\cdots x_n^{a_n}$, with $0\leq a_i\leq p^e-1$ for all $i$. Furthermore, the $S$-linear map $\tr:{^eS}\to S$ that sends $x_1^{p^e-1}\cdots x_n^{p^e-1}$ to $1$, and all other basis elements to $0$, called the trace map, is a generator for $\Hom_S({^eS},S)$ (via premultiplication).

Returning to case $k$ is $F$-finite, let $R=S/I$, where $I$ is an ideal of $S$. Via Fedder's result, we have the following $R$-module isomorphism   \[\Psi:\frac{(I^{[p^e]}:I)}{I^{[p^e]}}\to \Hom_R({^{e}}R,R),\] 
defined  by
\[\Psi(\overline{s}) = \overline{\tr(s\cdot -)},\] for $\overline{s} \in \frac{(I^{[p^e]}:I)}{I^{[p^e]}}$. Note that, on the right, $\overline{\cdot}$ means going modulo $I$ and the $R$-module structure on the module is given by premultiplication. In particular, for any ideal $\mathfrak{a}\subset S$, we get an isomorphism  \[\Psi:\frac{\mathfrak{a}(I^{[p^e]}:I)+I^{[p^e]}}{I^{[p^e]}}\to \overline{\mathfrak{a}}\Hom_R({^{e}}R,R).\]

\begin{definition}
A \emph{Cartier subalgebra on $R$} is a graded $k$-subalgebra $\mathcal{D}=\oplus_{e\geq0}\mathcal{D}_e$ of $\mathcal{C}^R$ satisfying $\mathcal{D}_0=\mathcal{C}_0=\Hom_R(R,R)=R$, and $\mathcal{D}_e \neq 0$ for some $e>0$.  
\end{definition}

We bring to the attention of the reader that the setting of Cartier subalgebras allows an uniform treatment of pairs and triples as discussed in Section 4.3.1 of ~\cite{BST-F-signature-of-pairs}.

\begin{definition} {\cite{BlickleTestIdealsViaAlgebrasOfP-eLinearMaps}}
A $F$-graded system of ideals of $R$ is a sequence $(\mathfrak{a}_e)_{e\geq 0}$ of ideals of $R$ such that $a_0=R$ and 
\[\mathfrak{a}_e\mathfrak{a}_{e'}^{[p^e]}\subseteq a_{e+e'},\]
for all $e,e'\geq 0$.  
\end{definition}

The set of Cartier operators of degree $e$, $\Hom_R({^eR},R)$, has a second $R$-module structure given by premultiplication by elements of $R$. More precisely, given $r\in R$ and $\phi:{^eR}\to R$ we set for $(r\cdot \phi)(s)=\phi(rs)$. Given a $F$-graded system of ideals $(\mathfrak{a}_e)_{e\geq 0}$, there is a Cartier subalgebra \[\mathcal{C}_{\mathfrak{a}_{\bullet}}=\bigoplus_{e\geq 0}\mathfrak{a}_e\cdot \Hom_R({^eR},R).\] Indeed, if $r_e\in \mathfrak{a}_e$, $r_{e'}\in \mathfrak{a}_{e'}$ , $\phi \in \Hom_R({^eR},R)$, and $\phi' \in \Hom_R({^{e'}R},R)$ then, \[(r_e\phi_e) * (r_{e'}\phi_{e'})= (\phi_e \circ F_*^{e}\phi_{e'})\left(r_er_{e'}^{p^e}\cdot - \right)=r_er_{e'}^{p^e}(\phi_e \circ F_*^{e}\phi_{e'}) \in \mathfrak{a}_{e+e'}\Hom_{e+e'}({^{e+e'}R},R).\] 

In fact, when $R$ is a Gorenstein ring there is an equivalence between Cartier subalgebras and $F$-graded systems of ideals \cite[Lemma 4.9]{BST-F-signature-of-pairs}.

\begin{remark} 
Let $\mathfrak{a}$ an ideal of $R$ and $t$ a positive real number. A typical example of an $F$-graded system of ideals is $\mathfrak{a}_e=\mathfrak{a}^{\lceil t p^e-1 \rceil}$.
\end{remark}

\subsection{Complexity} Let $A=\oplus_{i\geq 0} A_i$ be a graded ring, not necessarily commutative. We define 
\begin{itemize}
\item  $G_e(A)=G_e$ the subring of $A$ generated by degrees less than or equal than $e$. 
\item $k_e(A)=k_e$ the minimal number of homogeneous generators of $G_e$ . 
\end{itemize} 
The quantity $k_e-k_{e-1}$ is then the minimal number of elements of degree $e$ that are needed in a homogeneous generating set for the ring. We call the sequence $(k_e-k_{e-1})_e$ \emph{the complexity sequence} of $A$. The complexity of a ring measures the growth rate of the complexity sequence. More precisely,
we have the following definition.

\begin{definition}
The \emph{complexity of $A$} is defined as \[\cx(A)=\inf\{n\in \mathbb{R}_{>0}:k_e-k_{e-1}=\mathcal{O}(n^e)\}.\]
\end{definition}

We can recover the complexity sequence as the minimal number of generators of a quotient of $A_e$.

\begin{proposition}[{\cite[Corollary 2.4]{EnescuYao}}] \label{prop: c_e = generators of quotient} 
The minimal number of generators of $\frac{A_e}{(G_{e-1})_e}$ as an $A_0$-bimodule is $k_e-k_{e-1}$.
\end{proposition}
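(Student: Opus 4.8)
The plan is to prove the equivalent statement that $k_e = \sum_{i=1}^{e} c_i$ for every $e \geq 0$, where $c_i$ denotes the minimal number of $A_0$-bimodule generators of $V_i = A_i/(G_{i-1})_i$ (with $k_0 = 0$ and the empty sum when $e=0$); the desired equality $k_e - k_{e-1} = c_e$ then follows by telescoping. Throughout I treat $A_0$ as the ground ring, so a homogeneous generating set of $G_e$ means a set $X$ of positive-degree homogeneous elements for which the smallest subring $A_0[X]$ containing $A_0$ and $X$ is all of $G_e$. The first thing I would record is that $(G_e)_i = A_i$ for all $i \leq e$: one inclusion holds because each $A_i$ with $i \leq e$ is among the generators of $G_e$, and the reverse holds because $A$ is graded, so no product of homogeneous elements can have total degree smaller than $i$ once a factor of degree $i$ is involved.

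For the inequality $k_e \leq \sum_{i=1}^e c_i$ I would argue by explicit construction. For each $i \leq e$ choose elements $g_{i,1}, \dots, g_{i,c_i} \in A_i$ whose classes generate $V_i$ as an $A_0$-bimodule, and let $X$ be their union. I would then show $A_i \subseteq A_0[X]$ for all $i \leq e$ by induction on $i$: granting $A_j \subseteq A_0[X]$ for $j < i$, the subring $A_0[X]$ already contains every product of such $A_j$'s and hence contains $(G_{i-1})_i$; adjoining the bimodule combinations $a\, g_{i,t}\, b$ with $a,b \in A_0$ then recovers all of $A_i$, since the classes of the $g_{i,t}$ generate the quotient $V_i$. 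This yields $A_0[X] = G_e$ with $|X| = \sum_{i=1}^e c_i$.

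The reverse inequality $k_e \geq \sum_{i=1}^e c_i$ is where the real content lies, and the key is a degree-bookkeeping argument. Let $Y$ be any homogeneous generating set of $G_e$ and set $Y_i = Y \cap A_i$. I would show that the classes of $Y_i$ generate $V_i$ as an $A_0$-bimodule for each $i \leq e$, which forces $|Y_i| \geq c_i$ and hence $|Y| \geq \sum_{i=1}^e |Y_i| \geq \sum_{i=1}^e c_i$. To see the generation claim, expand an arbitrary element of $A_i = (G_e)_i$ as a sum of monomials $c_0\, y_{j_1} c_1 \cdots y_{j_k} c_k$ with $c_\ell \in A_0$ and $y_{j_\ell} \in Y$; since the $A_0$-factors have degree $0$, the positive degrees of the $y_{j_\ell}$ must sum to $i$. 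Thus either every generator-factor has degree $< i$, in which case the monomial lies in $(G_{i-1})_i$, or exactly one factor has degree $i$ and all others lie in $A_0$, in which case the monomial equals $a\, y\, b$ for some $y \in Y_i$ and $a,b \in A_0$. Reducing modulo $(G_{i-1})_i$ then exhibits the class of any element of $A_i$ as an $A_0$-bimodule combination of the classes of $Y_i$.

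The main obstacle I anticipate is exactly this last bookkeeping step: in the non-commutative graded setting one must verify that the degree-$i$ piece of $A_0[Y]$ only ever involves generators of degree $\leq i$, and that the degree-$0$ coefficients can be absorbed harmlessly into left and right $A_0$-multiplications, so that the naive description "generated by products of the $y$'s" is upgraded to the clean statement about $A_0$-bimodule generators of the single quotient $V_i$. Once both inequalities are established, $k_e = \sum_{i=1}^e c_i$, and subtracting the corresponding identity for $e-1$ gives $k_e - k_{e-1} = c_e$, which is the assertion.
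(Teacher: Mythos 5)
Your proof is correct. Note that the paper itself gives no argument for this statement --- it is imported wholesale as \cite[Corollary 2.4]{EnescuYao} --- and your double-inequality proof (the explicit construction giving $k_e \leq \sum_{i\leq e} c_i$, the degree bookkeeping showing that the degree-$i$ part $Y_i$ of any homogeneous generating set must generate $A_i/(G_{i-1})_i$ as an $A_0$-bimodule, and then telescoping) is essentially the same argument as in that reference, so there is nothing to add beyond minor hygiene such as discarding $0$ from generating sets.
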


In this note we are interested to understand the complexity of Cartier subalgebras and the ring of Frobenius operators. As the growth of these rings is expected to be dependent on the characteristic, we give the following definition.

\begin{definition}
[C.f. \cite{EnescuYao}]
\label{exp} 
Let $R$ be a local ring of positive characteristic $p$, and let $A$ be a Cartier subalgebra. We define \emph{the Frobenius exponent of $A$} as \[\exp_F(A)=\log_p\left(\cx(A)\right).\]

\end{definition}

In the case that $A=\mathcal{C^R}$ is the total Cartier algebra of a complete local ring $(R,\m,k)$, we see that 
\[\cx_F(R)=\cx_F(\mathcal{F}(E_R(k)))=\exp_F(\mathcal{C^R}).\]

In fact, one can note that if $(R, \m, k)$ is local, then $E_R(k) =E _{\hat{R}}(k)$. It can be checked that 
$\mathcal{F}(E_R(k)) \otimes_R \hat{R}= \mathcal{F}(E_{\hat{R}}(k))$ and $\mathcal{C^R} \otimes_R \hat{R} = \mathcal{C^{\hat{R}}}.$ This shows that the complexity sequences of $\mathcal{F}(E_R(k))$ and $\mathcal{C^R}$ remain invariant under completion and hence their computation can be done after taking first the completion, if necessary.

\section{Results - The graded case}

In this section we prove our main results, namely Theorem ~\ref{thm: main} and its corollary.

We fix some notation, throughout this section $T=k[x_1,\ldots,x_n]$ is the polynomial ring in $n$ variables, $J\subseteq T$ is a homogeneous ideal, and $\m=(x_1,\ldots,x_n)$.  Take $S=T_{(x_1,\ldots,x_n)}$, $I$ the image of $J$ in $S$, and $R=S/I$. Additionally, we assume that $[k : k^p] < \infty$ and hence $R$ is $F$-finite. We also let $(R,\mathcal{D})$ be a Cartier subalgebra,  and denote by $J_e^\mathcal{D}$ the unique ideal of $S$ such that $J_e^\mathcal{D}\supseteq I^{[p^e]}$ and $\Psi\left( \frac{J_e^{\mathcal{D}}}{I^{[p^e]}} \right)= \mathcal{D}_e$, where $\Psi$ is the map defined in Section 2.3.

\begin{remark}
For an $F$-graded system of ideals $(\a_e)_{e\geq 0}$ and the Cartier subalgebra associated to it, $\mathcal{D}=\mathcal{C}_{\a_{\bullet}}$, one has $J_e^{\mathcal{D}} = \a_e(I^{[p^e]}\colon I)+I^{[p^e]}$.

\end{remark}

\begin{definition}
Let $T=k[x_1,\ldots,x_n]$ with the usual grading. Given a homogeneous ideal $J\subseteq S$, we define the \emph{generating degree of $J$}, denoted by $\dd(J)$, as the maximal degree among a minimal set of generators. Equivalently, \[ \dd(J) =\inf\{m \in \mathbb{Z}\mid J=J_{\leq m}T\}\]
\end{definition}

\begin{remark} \label{rmk: bound on regularity}
By \cite[Theorem 16.3.1]{BroSharp} and \cite[Corollary 2.7]{CharacteristicFreeBoundsForTheCastelnuovoMunfordRegularityCS} we have \[\dd(J)\leq \reg(J) \leq (2\dd(J))^{2^{\dim(T)-2}},\]where $\reg(J)$ denotes the regularity of an ideal $J$.

\end{remark}

\begin{definition}
\label{bounded}
We say that an $F$-graded system of ideals $(\mathfrak{a}_e)_{e \geq 0}$ on $T$ has  \emph{bounded generating degree growth} $n$ if each $\mathfrak{a}_e$ is homogeneous, and $\dd(\mathfrak{a}_e)=\mathcal{O}(n^e)$ for some number $n$.  
For example, if $\mathfrak{a}$ is a homogeneous ideal of $T$ and $t$ is a positive real number, the family $\mathfrak{a}_e=\mathfrak{a}^{\lceil t (p^e-1)\rceil}$ is an $F$-graded system of ideal with bounded generating degree growth. Indeed, $\dd(\mathfrak{a}^{\lceil t (p^e-1)\rceil})=\dd(a)\lceil t (p^e-1)\rceil= \mathcal{O}(p^e)$. 

Furthermore, a Cartier subalgebra $\mathcal{D}$ is called of  \emph{bounded generating degree growth} if $\{J_e^{\mathcal{D}}\} _e$ has bounded generating degree growth.
\end{definition}

Note that if $(\mathfrak{a}_e)_{ e\geq 0}$ is a $F$-graded system of ideals on $T$, then $(\mathfrak{a}_eR)_{e \geq 0}$ is a $F$-graded system of ideals on $R$. We need the following lemma:

\begin{lemma} \label{lem: degree bound}
Let $J\subseteq T=k[x_1,\ldots,x_n]$ be a homogeneous ideal and  $d=\dd(J)$. If $g\in J$ is homogeneous polynomial of degree larger than $d$, then $g\in \m J$. Equivalently,  \[\m^{t}\cap J \subseteq \m J\]for all $t>d$.
\end{lemma}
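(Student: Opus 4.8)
The plan is to prove the equivalent colon-type statement $\m^t \cap J \subseteq \m J$ for all $t > d$, since the two formulations are clearly equivalent: a homogeneous $g \in J$ of degree $t > d$ lies in $\m^t$, and conversely any element of $\m^t \cap J$ is a combination of such high-degree pieces. The key observation is that the condition $d = \dd(J)$ means $J$ is generated by homogeneous elements of degree at most $d$, so I should exploit the graded Nakayama lemma.

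First I would pass to the quotient $J/\m J$, which is a finite-dimensional graded $k = T/\m$-vector space. By the graded Nakayama lemma, a homogeneous generating set of $J$ maps to a $k$-basis of $J/\m J$, and the degrees appearing in $J/\m J$ are exactly the degrees of the elements in a \emph{minimal} homogeneous generating set. Since $\dd(J) = d$, every minimal homogeneous generator has degree at most $d$, so the graded piece $(J/\m J)_t$ vanishes for every $t > d$.

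The second step is to translate this vanishing back. Fix $t > d$ and take a homogeneous $g \in J$ of degree $t$. Its image $\bar g$ in $(J/\m J)_t = 0$ is zero, which says precisely that $g \in \m J$. This handles each homogeneous piece, and since $\m^t \cap J$ is spanned by homogeneous elements of degree $\geq t > d$, I conclude $\m^t \cap J \subseteq \m J$, giving both forms of the statement at once.

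The main obstacle, such as it is, is purely bookkeeping: I must be careful that the grading on $J/\m J$ is the induced one and that the degree-$t$ component really is spanned by images of degree-$t$ elements of $J$, which requires $J$ and $\m$ to be homogeneous so that $\m J$ is a graded submodule. Once homogeneity is in place the argument is a direct application of graded Nakayama, with no delicate estimates. I would state it cleanly as: $\dd(J) = d$ is equivalent to $(J/\m J)_t = 0$ for all $t > d$, and the lemma is the unwinding of that equivalence.
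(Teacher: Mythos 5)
Your proof is correct and is essentially the same argument as the paper's: the paper writes a homogeneous $g \notin \m J$ as $\sum r_i g_i$ over a minimal homogeneous generating set and derives a degree contradiction from some $r_i \notin \m$, which is precisely the unpacked form of the graded Nakayama statement you invoke, namely that $(J/\m J)_t = 0$ for $t > d$. Both proofs rest on the same fact — minimal homogeneous generators all have degree at most $d$ — so the difference is only one of packaging.
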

\begin{proof}
Let $g_1,\ldots,g_v$ be a minimal set of homogeneous generators for $J$. If $g$ is a homogeneous polynomial of degree larger than $d$, and $g\notin \m J$, then we can find homogeneous elements $r_i $ for which
\[g=r_1g_1+\ldots + r_vg_v\] 
with some $r_i\notin \m$. But this implies that $g$ and $g_i$ have the same degree, which is smaller than or equal to $d$ a contradiction. 
\end{proof}

\begin{lemma} \label{lem: D_el correspondence} Let $R=S/I$. If $\mathcal{D}_{e,l} = \{\phi \in \mathcal{D}_e \mid \phi(^eR)\subseteq \m^lR\}$, then
\[\Psi\left(\frac{(\m^{l})^{[p^e]}\cap J_e^{\mathcal{D}}+I^{[p^e]}}{I^{[p^e]}}\right) = \mathcal{D}_{e,l}.\]
\end{lemma}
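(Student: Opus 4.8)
The plan is to translate the image condition defining $\mathcal{D}_{e,l}$ into a membership condition on the Fedder representative, and then to invoke the modular law. Fix $\phi\in\mathcal{D}_e$ and write $\phi=\Psi(\overline{s})$ with $s\in J_e^{\mathcal{D}}$; since $\Psi$ is injective it suffices to describe exactly which such $s$ yield $\phi\in\mathcal{D}_{e,l}$. By definition $\phi(\overline{r})=\overline{\tr(sr)}$ for $r\in S$. Because ${}^eS$ is free over $S$ on the monomial basis $\{x_1^{a_1}\cdots x_n^{a_n}: 0\le a_i\le p^e-1\}$ (with respect to the Frobenius-twisted action $s\cdot m=s^{p^e}m$), the module ${}^eR$ is generated over $R$ by the images of these monomials, so the left $R$-module image $\phi({}^eR)$ is exactly the image in $R$ of the ideal $\tr(s\cdot S)\subseteq S$. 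Hence $\phi\in\mathcal{D}_{e,l}$ if and only if $\tr(s\cdot S)\subseteq \m^l+I$.

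The technical heart is the following claim, which I would isolate: for any ideal $\mathfrak{b}\subseteq S$ and any $s\in S$, one has $\tr(s\cdot S)\subseteq\mathfrak{b}$ if and only if $s\in\mathfrak{b}^{[p^e]}$. To prove it, expand $s$ uniquely in the free basis as $s=\sum_a s_a^{p^e}\,x_1^{a_1}\cdots x_n^{a_n}$ with $s_a\in S$. A direct computation using the defining properties of $\tr$ (namely $\tr(u^{p^e}y)=u\,\tr(y)$, together with the fact that $\tr$ sends $x_1^{p^e-1}\cdots x_n^{p^e-1}$ to $1$ and the remaining basis monomials to $0$) shows that $\tr\bigl(s\cdot x_1^{p^e-1-a_1}\cdots x_n^{p^e-1-a_n}\bigr)=s_a$; the only subtlety is tracking the single carry in each exponent $c_i-a_i+(p^e-1)$, which forces the surviving term to be precisely the one indexed by $c=a$. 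Consequently $\tr(s\cdot S)$ is the ideal generated by all coordinates $s_a$. On the other hand, the identity $(u+v)^{p^e}=u^{p^e}+v^{p^e}$ shows that $\mathfrak{b}^{[p^e]}=\{s\in S: s_a\in\mathfrak{b}\text{ for all }a\}$. Combining the two descriptions proves the claim. This is the step I expect to require the most care, as it is where the explicit monomial structure of $\tr$ is used.

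Applying the claim with $\mathfrak{b}=\m^l+I$ gives $\phi\in\mathcal{D}_{e,l}$ if and only if $s\in(\m^l+I)^{[p^e]}$. Since Frobenius powers are additive on sums of ideals, $(\m^l+I)^{[p^e]}=(\m^l)^{[p^e]}+I^{[p^e]}$, so the condition becomes $s\in(\m^l)^{[p^e]}+I^{[p^e]}$. Therefore $\mathcal{D}_{e,l}=\Psi\Bigl(\frac{J_e^{\mathcal{D}}\cap\bigl((\m^l)^{[p^e]}+I^{[p^e]}\bigr)}{I^{[p^e]}}\Bigr)$. Finally, because $I^{[p^e]}\subseteq J_e^{\mathcal{D}}$, the modular (Dedekind) law yields $J_e^{\mathcal{D}}\cap\bigl((\m^l)^{[p^e]}+I^{[p^e]}\bigr)=\bigl((\m^l)^{[p^e]}\cap J_e^{\mathcal{D}}\bigr)+I^{[p^e]}$, which is exactly the numerator appearing in the statement. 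This completes the argument.
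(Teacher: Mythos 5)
Your overall route is the same as the paper's: push everything through Fedder's isomorphism $\Psi$, translate the condition $\phi({}^eR)\subseteq\m^l R$ into $\tr(s\cdot{}^eS)\subseteq\m^l+I$, characterize this by membership of $s$ in the Frobenius power $(\m^l+I)^{[p^e]}=(\m^l)^{[p^e]}+I^{[p^e]}$, and then rearrange the numerator. Your explicit appeal to the modular law at the end is exactly what the paper does implicitly when it writes $s=s'+r$ and observes $\bar{s}=\bar{s'}$ with $s'=s-r\in(\m^l)^{[p^e]}\cap J_e^{\mathcal{D}}$. The logical skeleton is sound, and your computation $\tr\bigl(s\cdot x_1^{p^e-1-a_1}\cdots x_n^{p^e-1-a_n}\bigr)=s_a$ is correct as far as it goes.

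The one genuine issue is that your proof of the key claim is carried out only for perfect $k$: you expand $s=\sum_a s_a^{p^e}x_1^{a_1}\cdots x_n^{a_n}$ in the monomial basis and use the explicit description of $\tr$ as the projection onto $x_1^{p^e-1}\cdots x_n^{p^e-1}$. But the standing hypothesis of the paper is only $[k:k^p]<\infty$, and Section 2.3 is explicit that in this generality $\tr$ denotes an abstract generator of $\Hom_S({}^eS,S)$ as an ${}^eS$-module (Fedder's Lemma), not the monomial trace; indeed the monomials alone do not form a free basis of ${}^eS$ unless $k$ is perfect. The repair is routine but must be made: either enlarge the basis to $\{\lambda\, x_1^{a_1}\cdots x_n^{a_n}\}$ with $\lambda$ running over a basis of $k$ over $k^{p^e}$ and rerun your coordinate argument (checking that the dual projection onto the top basis element is again a generator of $\Hom_S({}^eS,S)$), or split the claim as the paper effectively does: the ``if'' direction needs no basis at all, since $s=\sum_i b_i^{p^e}v_i$ with $b_i\in\mathfrak{b}$ gives $\tr(sm)=\sum_i b_i\tr(v_im)\in\mathfrak{b}$ by $p^{-e}$-linearity; and the ``only if'' direction follows from the generator property, since $\tr(s\cdot{}^eS)=\{\phi(s):\phi\in\Hom_S({}^eS,S)\}$ and, for $F$-finite regular local $S$, one has $s\in\mathfrak{b}^{[p^e]}$ precisely when $\phi(s)\in\mathfrak{b}$ for every such $\phi$ (your coordinate argument, run with an arbitrary free basis of ${}^eS$). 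With that adjustment your proof is complete and matches the paper's in substance.
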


\begin{proof}

As discussed above, $s\mapsto \tr(s\cdot -)$ induces the isomorphism $\Psi$ that, when restricted to $J_e^\mathcal{D}/I^{[p^e]}$, gives an isomorphism with $\mathcal{D}_e$. Let $s\in (\m^{l})^{[p^e]}\cap J_e^{\mathcal{D}}+I^{[p^e]}$, then we can write $s=s'+r$ with $s'\in (\m^l)^{[p^e]}$ and $r\in I^{[p^e]}$. We deduce 
\[\tr(s\cdot(^eS) )\subseteq \tr(s'\cdot (^eS))+\tr(r\cdot (^eS))\subseteq \m^l+I,\]
that is $\Psi(\bar{s})=\Psi(\bar{s'})\in \mathcal{D}_{e,l}$. This shows $``\subseteq"$.

To prove the other inclusion, suppose that $s\in J_e^{\mathcal{D}}\subseteq(I^{[p^e]}\colon I)$ is such that $\Psi(\bar{s})\in \mathcal{D}_{e,l}$, then $\tr(s\cdot (^eS))\subseteq \m^{l}+I$. Thus $\phi(s)\in \m^l+I$ for all $\phi \in \Hom_S(^eS,S)$, this implies $s \in (\m^l+I)^{[p^e]}=(\m^l)^{[p^e]}+I^{[p^e]}$, that is $s=s'+r$ with $s'\in (\m^l)^{[p^e]}$ and $r\in I^{[p^e]}$. But $I^{[p^e]} \subseteq J_e^{\mathcal{D}}$, so $\bar{s}=\bar{s'}\in \frac{(\m^{l})^{[p^e]}\cap J_e^{\mathcal{D}}+I^{[p^e]}}{I^{[p^e]}}$, which in turn implies the result.
\end{proof}

\begin{proposition} \label{prop:c_e < l}
Let $A=\oplus_{e \geq 0}A_e$ be a graded ring, then $A_e$ has a natural structure of left and right $A_0$-module. If $\mu_{A_0}^l$ and $\mu_{A_o}^r$ denote the minimum number of generators as left and right $A_0$-modules, then  \[k_e(A)-k_{e-1}(A)\leq \min\{ \mu_{A_0}^l(A_e),\mu_{A_0}^r(A_e)\}\]  
\end{proposition}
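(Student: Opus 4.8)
The plan is to reduce everything to Proposition~\ref{prop: c_e = generators of quotient}, which already identifies $k_e(A)-k_{e-1}(A)$ with the minimal number of $A_0$-bimodule generators of the quotient $A_e/(G_{e-1})_e$. Once this identification is in hand, the statement follows from two elementary observations about generation of modules, applied to both the left and the right structures.

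First I would observe that for any $A_0$-bimodule $M$, its minimal number of bimodule generators is bounded above by its minimal number of left $A_0$-module generators, and likewise by its minimal number of right generators. Indeed, a generating set for $M$ as a left module is a fortiori a generating set for $M$ as a bimodule, since the left action is available among the bimodule operations (every element $\sum a_i m_i$ is the special case $\sum a_i m_i \cdot 1$ of a bimodule combination); the same holds on the right. Applying this to $M = A_e/(G_{e-1})_e$ and invoking Proposition~\ref{prop: c_e = generators of quotient} gives
\[
k_e(A)-k_{e-1}(A) \leq \min\{\mu_{A_0}^l(A_e/(G_{e-1})_e),\ \mu_{A_0}^r(A_e/(G_{e-1})_e)\}.
\]

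Next I would pass from the quotient to $A_e$ itself. Since $A_0 \subseteq G_{e-1}$, the degree-$e$ piece $(G_{e-1})_e$ is an $A_0$-sub-bimodule of $A_e$, so the quotient map $A_e \twoheadrightarrow A_e/(G_{e-1})_e$ is a surjection of left (resp. right) $A_0$-modules; hence the image of any left (resp. right) generating set of $A_e$ generates the quotient. This yields $\mu_{A_0}^l(A_e/(G_{e-1})_e) \leq \mu_{A_0}^l(A_e)$ and the analogous inequality on the right. Combining these with the displayed bound gives $k_e(A)-k_{e-1}(A) \leq \min\{\mu_{A_0}^l(A_e),\ \mu_{A_0}^r(A_e)\}$, as desired.

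There is no genuine obstacle here: the real content is packaged in Proposition~\ref{prop: c_e = generators of quotient}, and what remains are the standard facts that bimodule generation is no harder than one-sided generation, and that a quotient needs no more generators than the ambient module. The only point to keep in mind is that $A_0$ may be non-commutative, so one must genuinely distinguish left and right throughout; since the final bound is the \emph{minimum} of the two one-sided estimates, this asymmetry causes no difficulty.
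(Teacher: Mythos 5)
Your proof is correct and follows the same route as the paper: invoke Proposition~\ref{prop: c_e = generators of quotient} to identify $k_e(A)-k_{e-1}(A)$ with the minimal number of bimodule generators of $A_e/(G_{e-1})_e$, then conclude by elementary comparison of generating sets. The paper compresses the remaining steps into ``the result now follows,'' and what you wrote out --- that bimodule generation is no harder than one-sided generation, and that a quotient needs no more generators than the ambient module --- is exactly the implicit content.
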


\begin{proof}
By Proposition ~\ref{prop: c_e = generators of quotient} above  $k_e-k_{e-1}$ is the minimal number of generators of $\ds \frac{A_e}{(G_{e-1})_e}$ as an $A_0$-$A_0$-bimodule. The results now follows. 
\end{proof}

\begin{lemma} \label{lem: hom of finite module length bound}
Let $M$ be a finite length $R$-module supported at $\m$. If $e>0$, then \[\ell_R\left( \Hom_R({^eR},M)\right)\leq \ell_R\left(\frac{R}{\m^{[p^e]}R}\right)\ell_R(M). \] 
\end{lemma}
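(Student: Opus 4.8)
The plan is to reduce the length bound to a count of the minimal number of generators of $^{e}R$ as an $R$-module, exploiting that $\Hom_R(-,M)$ turns a surjection onto $^{e}R$ into an embedding of $\Hom_R(^{e}R,M)$ into a finite direct sum of copies of $M$.

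First I would record that, since $R$ is $F$-finite, $^{e}R$ is a finitely generated $R$-module; let $g=\mu_R(^{e}R)$ be its minimal number of generators and fix a surjection $R^{\oplus g}\twoheadrightarrow {}^{e}R$. Applying the left exact contravariant functor $\Hom_R(-,M)$ gives an injection
\[
\Hom_R(^{e}R,M)\hookrightarrow \Hom_R(R^{\oplus g},M)\cong M^{\oplus g},
\]
and since length is additive over direct sums and monotone under submodules,
\[
\ell_R\left(\Hom_R(^{e}R,M)\right)\le \ell_R\left(M^{\oplus g}\right)=g\cdot \ell_R(M).
\]

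It then remains to identify $g=\mu_R(^{e}R)$ with $\ell_R\left(R/\m^{[p^e]}R\right)$. For this I would compute $\mu_R(^{e}R)=\dim_k\left(^{e}R\otimes_R k\right)=\dim_k\left(^{e}R/\m\cdot{}^{e}R\right)$ and observe that the left action of $\m$ on $^{e}R$ is multiplication by $p^e$-th powers, so its image is precisely the ideal $\m^{[p^e]}R$; hence $^{e}R/\m\cdot{}^{e}R\cong R/\m^{[p^e]}R$ as $R$-modules. Because $R/\m^{[p^e]}R$ has finite length with every composition factor equal to $k=R/\m$, its $k$-dimension coincides with its length, so $g=\ell_R\left(R/\m^{[p^e]}R\right)$. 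Combining this with the displayed inequality yields the claim.

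I expect the generator count $\mu_R(^{e}R)=\ell_R\left(R/\m^{[p^e]}R\right)$ to be the only substantive step: it is where $F$-finiteness is invoked to guarantee that $^{e}R$ is finitely generated, and where one must keep careful track of the twisted ($p^{-e}$-linear) $R$-module structure on $^{e}R$ when passing to $^{e}R\otimes_R k$. Alternatively, one can bypass computing $g$ for a general $M$ by inducting on $\ell_R(M)$: left exactness of $\Hom_R(^{e}R,-)$ makes $\ell_R\!\left(\Hom_R(^{e}R,-)\right)$ subadditive on short exact sequences, reducing the statement to the base case $M=k$, where $\Hom_R(^{e}R,k)\cong \Hom_k\!\left(^{e}R/\m\cdot{}^{e}R,k\right)\cong \Hom_k\!\left(R/\m^{[p^e]}R,k\right)$ has length exactly $\ell_R\!\left(R/\m^{[p^e]}R\right)$.
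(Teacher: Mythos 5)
Your reduction to the generator count is where the argument breaks, and it breaks precisely at the step you flagged as the only substantive one: the identity $\mu_R({^eR})=\ell_R\left(R/\m^{[p^e]}R\right)$ is false unless $k$ is perfect. Under the paper's conventions the left action of $r\in R$ on ${^eR}$ is $r\cdot s=r^{p^e}s$, so ${^eR}\otimes_R k={^eR}/\m\cdot{^eR}$ is the Frobenius twist ${^e\left(R/\m^{[p^e]}R\right)}$, \emph{not} $R/\m^{[p^e]}R$ with its usual structure: the residue field acts through $\lambda\mapsto\lambda^{p^e}$. The $k$-dimension that Nakayama computes is therefore $[k:k^{p^e}]\cdot\ell_R\left(R/\m^{[p^e]}R\right)$. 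The paper assumes only $[k:k^p]<\infty$, not perfectness, so this factor can be large: for $R=k=\FF_p(t)$ and $e=1$ one has $\mu_R({^1R})=[k:k^p]=p$, while $\ell_R\left(R/\m^{[p]}R\right)=1$. As written, your argument only yields the weaker bound $[k:k^{p^e}]\,\ell_R\left(R/\m^{[p^e]}R\right)\ell_R(M)$.

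There is a second, compounding issue: which of the two $R$-module structures on $\Hom_R({^eR},M)$ you are measuring. Your embedding $\Hom_R({^eR},M)\hookrightarrow M^{\oplus g}$, $\phi\mapsto(\phi(x_i))_i$, is $R$-linear only for the target (postmultiplication) structure $(r*\phi)(s)=r\phi(s)$, whereas the lemma is used in Theorem~\ref{thm: main1} with the premultiplication structure $(r\cdot\phi)(s)=\phi(rs)$, the one for which Fedder's map $\Psi$ is $R$-linear. These genuinely differ: since $\phi(r^{p^e}s)=r\phi(s)$, postmultiplication by $r$ equals premultiplication by $r^{p^e}$, so the postmultiplication length is $[k:k^{p^e}]$ times the premultiplication length. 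In particular, for $R=k=\FF_p(t)$ and $M=k$ the postmultiplication length of $\Hom_R({^eR},M)$ equals $[k:k^{p^e}]>1=\ell_R\left(R/\m^{[p^e]}R\right)\ell_R(M)$, so the inequality your argument actually targets is \emph{false}; fixing the generator count alone cannot save it — you must also convert between the two structures, at which point the two factors of $[k:k^{p^e}]$ cancel and the lemma does follow. If $k$ is perfect, both discrepancies vanish and your main argument is correct and genuinely different from the paper's (a one-shot dualization of a free presentation rather than an induction). Your fallback is exactly the paper's proof — induction on $\ell_R(M)$ via left exactness — but note that its base case needs the same care with the twist: one must compute the premultiplication length of $\Hom_k\left({^e\left(R/\m^{[p^e]}R\right)},k\right)$, for instance by identifying it with the $k^{p^e}$-linear dual of $R/\m^{[p^e]}R$, which has length $\ell_R\left(R/\m^{[p^e]}R\right)$; the untwisted identification $\Hom_k\left(R/\m^{[p^e]}R,k\right)$ you wrote is again only valid for perfect $k$.
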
 
\begin{proof}
We proceed by induction on the length of $M$. If $\ell_R(M)=1$ then $M\cong R/\m R$ and so 
\[\ell_R\left( \Hom_R({^eR},M)\right)=\ell_R\left( \Hom_R\left({^eR},\frac{R}{\m R}\right)\right)\leq\ell_R\left(\frac{R}{\m^{[p^e]}R}\right).\] 
Let $m\in M$ a nonzero element in the socle of $M$, so the map $R/\m\to M$ that maps $1\mapsto m$ is injective. We have an exact sequence
\[0\to R/\m \to M \to N\to 0,\]
with $\ell_R(N)+1= \ell_R(M)$. We apply $\Hom({^eR},-)$ to the above sequence to obtain
\[\Hom({^eR},R/\m R)\to \Hom({^eR},M) \to \Hom({^eR},N).\] 
It follows that 
\begin{align*}
\ell_R \left( \Hom_R({^eR},M)\right) &\leq \ell_R\left( \Hom_R({^eR},R/\m)\right)+\ell_R\left( \Hom_R({^eR},N) \right)\\ 
&\leq \ell_R\left(\frac{R}{\m^{[p^e]}R}\right) +\ell_R(N)\ell_R\left(\frac{R}{\m^{[p^e]}R}\right) \\
&= \ell_R\left(\frac{R}{\m^{[p^e]}R}\right)\ell_R(M).
\end{align*}
\end{proof}

\begin{theorem} \label{thm: main1}
If $d_e=\dd(J_e^{\mathcal{D}}) = \mathcal{O}(p^{te})$ for some fixed $t$, then \[\exp_F(\mathcal{D})\leq t\dim(R). \] 
\end{theorem}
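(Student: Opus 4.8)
The plan is to bound the complexity sequence $k_e-k_{e-1}$ of $\mathcal{D}$ through the minimal number of generators of $\mathcal{D}_e$ as an $R$-module, and to show this number is $\mathcal{O}(p^{te\dim R})$. By Proposition \ref{prop:c_e < l} it suffices to bound $\mu_R(\mathcal{D}_e)$ for one of the two $R$-module structures on $\mathcal{D}_e$. I would use the premultiplication structure, which under the Fedder isomorphism $\Psi$ corresponds to ordinary multiplication by $S$ on $J_e^{\mathcal{D}}/I^{[p^e]}$; this is a genuine $R$-module structure because $I\cdot(I^{[p^e]}\colon I)\subseteq I^{[p^e]}$, so $I$ annihilates $J_e^{\mathcal{D}}/I^{[p^e]}$. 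Writing $q=p^e$, graded Nakayama then gives $\mu_R(\mathcal{D}_e)=\dim_k\bigl(J_e^{\mathcal{D}}/(\m J_e^{\mathcal{D}}+I^{[q]})\bigr)$.

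The key geometric step is to show that the minimal generators of $\mathcal{D}_e$ sit in bounded image-degree. Concretely, set $l_e=\lfloor d_e/q\rfloor+1$, so that $l_eq>d_e$, and I claim $\mathcal{D}_{e,l_e}\subseteq \m\mathcal{D}_e$. Under $\Psi$ this translates, via Lemma \ref{lem: D_el correspondence}, into the inclusion $(\m^{l_e})^{[q]}\cap J_e^{\mathcal{D}}\subseteq \m J_e^{\mathcal{D}}+I^{[q]}$. Since every homogeneous element of $(\m^{l_e})^{[q]}$ has degree at least $l_eq>d_e$, Lemma \ref{lem: degree bound} applied to the ideal $J_e^{\mathcal{D}}$ (of generating degree $d_e$) forces every homogeneous element of $(\m^{l_e})^{[q]}\cap J_e^{\mathcal{D}}$ into $\m J_e^{\mathcal{D}}$, proving the claim. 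It is precisely here that one must use the premultiplication structure: the claim would fail for the Frobenius-twisted structure, where $\m\mathcal{D}_e$ corresponds to $\m^{[q]}J_e^{\mathcal{D}}$ rather than $\m J_e^{\mathcal{D}}$. Consequently $\mathcal{D}_e/\m\mathcal{D}_e$ is a quotient of $\mathcal{D}_e/\mathcal{D}_{e,l_e}$, so $\mu_R(\mathcal{D}_e)\leq \dim_k(\mathcal{D}_e/\mathcal{D}_{e,l_e})$.

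Next I would compare $\mathcal{D}_e/\mathcal{D}_{e,l_e}$ with a Hom module of finite length. Post-composing with the projection $R\to R/\m^{l_e}R$ gives a map $\mathcal{D}_e\to \Hom_R({^eR},R/\m^{l_e}R)$ whose kernel is exactly $\mathcal{D}_{e,l_e}$, hence an injection $\mathcal{D}_e/\mathcal{D}_{e,l_e}\hookrightarrow \Hom_R({^eR},R/\m^{l_e}R)$. Since $R/\m=k$, dimension over $k$ and length over $R$ agree on finite length modules, so Lemma \ref{lem: hom of finite module length bound} (with $M=R/\m^{l_e}R$) yields
\[\mu_R(\mathcal{D}_e)\leq \ell_R\bigl(\Hom_R({^eR},R/\m^{l_e}R)\bigr)\leq \ell_R\!\left(\frac{R}{\m^{[p^e]}R}\right)\ell_R\!\left(\frac{R}{\m^{l_e}R}\right).\]

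Finally I would feed in the asymptotics. The Hilbert--Kunz function gives $\ell_R(R/\m^{[p^e]}R)=\mathcal{O}(p^{e\dim R})$, while the Hilbert--Samuel function gives $\ell_R(R/\m^{l_e}R)=\mathcal{O}(l_e^{\dim R})$; since $d_e=\mathcal{O}(p^{te})$ we have $l_e=\mathcal{O}(p^{(t-1)e})$, whence $\ell_R(R/\m^{l_e}R)=\mathcal{O}(p^{(t-1)e\dim R})$. Multiplying, $\mu_R(\mathcal{D}_e)=\mathcal{O}(p^{te\dim R})=\mathcal{O}\bigl((p^{t\dim R})^e\bigr)$, so $k_e-k_{e-1}=\mathcal{O}\bigl((p^{t\dim R})^e\bigr)$ and therefore $\cx(\mathcal{D})\leq p^{t\dim R}$, giving $\exp_F(\mathcal{D})=\log_p\cx(\mathcal{D})\leq t\dim R$. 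I expect the main obstacle to be the second paragraph---pinning down the correct $R$-module structure so that the degree bound of Lemma \ref{lem: degree bound} controls $\m\mathcal{D}_e$---since the whole improvement from $\dim S$ to $\dim R$ rests on combining the image-degree filtration with the colon relation $I^{[q]}\subseteq J_e^{\mathcal{D}}$.
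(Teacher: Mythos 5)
Your proposal is correct and takes essentially the same route as the paper's own proof: the same truncation level $l_e\approx d_e/p^e+1$, the same application of Lemma \ref{lem: degree bound} to get $(\m^{l_e})^{[p^e]}\cap J_e^{\mathcal{D}}\subseteq \m J_e^{\mathcal{D}}$, the same identification via Lemma \ref{lem: D_el correspondence} and injection $\mathcal{D}_e/\mathcal{D}_{e,l_e}\hookrightarrow \lHom_R({^eR},R/\m^{l_e}R)$, the same length bound from Lemma \ref{lem: hom of finite module length bound}, and the same final appeal to Proposition \ref{prop:c_e < l}. The only differences are cosmetic (floor versus ceiling in $l_e$, and your explicit justification that the premultiplication structure is the right one, which the paper records simply as $\mu_R\left(J_e^{\mathcal{D}}/I^{[p^e]}\right)=\mu^r_{\mathcal{D}_0}(\mathcal{D}_e)$).
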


\begin{proof}
Let $d=\dim(R)$. For every $e$ we let $l_e=\lceil d_e/p^e \rceil+1$, Lemma \ref{lem: degree bound} implies 
\[(\m^{l_e})^{[p^e]}\cap J_e^{\mathcal{D}} \subseteq \m J_e^{\mathcal{D}}.\] 
Hence, 
\[\mu_R\left(\frac{J_e^{\mathcal{D}}}{I^{[p^e]}}\right) = \ell_R \left(\frac{J_e^{\mathcal{D}}}{\m J_e^{\mathcal{D}}+I^{[p^e]}}\right) \leq \ell_R \left( \frac{J_e^{\mathcal{D}}}{(\m^{l_e})^{[p^e]}\cap J_e^{\mathcal{D}}+I^{[p^e]}} \right).\]
From Lemma \ref{lem: D_el correspondence}, we have 
\[\frac{\mathcal{D}_e}{\mathcal{D}_{e,l_e}}\cong \frac{J_e^{\mathcal{D}}}{(\m^{l_e})^{[p^e]}\cap J_e^{\mathcal{D}}+I^{[p^e]}},\] 
which, by using the exact sequence 
\[0\to \mathcal{D}_{e,t} \to \mathcal{D}_e \to \lHom_R\left(^eR,R/\m^{l_e}R\right),\] 
gives an inclusion 
\[ \frac{\mathcal{D}_e}{\mathcal{D}_{e,l_e}} \hookrightarrow \lHom_R\left(^eR,R/\m^{l_e}R\right)\]
which, by Lemma \ref{lem: hom of finite module length bound}, implies  
\[\mu_R\left(\frac{J_e^{\mathcal{D}}}{I^{[p^e]}}\right) \leq \ell_R \left( \lHom_R(^eR,R/\m^{l_e}R) \right) \leq \ell_R\left( \frac{R}{\m^{l_e}R}\right) \ell_R \left( \frac{R}{\m^{[p^e]}R} \right)\] 
and so 
\[\mu_R\left(\frac{J_e^{\mathcal{D}}}{I^{[p^e]}}\right)\leq \mathcal{O}(l_e^d)\mathcal{O}(p^{ed})=\mathcal{O}\left( \left\lceil \frac{d_e}{p^e} \right\rceil^d p^{ed} \right)=\mathcal{O}(d_e^d)\leq \mathcal{O}(p^{tde}).\]

To finish the proof, simply note that $\mu_R\left(\frac{J_e^{\mathcal{D}}}{I^{[p^e]}}\right)= \mu^r_{\mathcal{D}_0} (\mathcal{D}_e)$ and we can apply Proposition~\ref{prop:c_e < l} to obtain 
$$\cx_F (\mathcal{D}) \leq p^{td},$$which furthermore gives $$\exp_F(\mathcal{D}) \leq t \cdot d.$$

\end{proof}

Our goal now is to give conditions for which $\dd(J_e^{\mathcal{D}})\leq \mathcal{O}(p^{te})$ for some $t$. We show that this happens among other cases when $\mathcal{D}=C_{\mathfrak{a}_{\bullet}}$ and the family of ideals $\mathfrak{a}_{\bullet}$ has bounded generating degree growth. We need to first bound the generating degree of the colon ideals $(I^{[p^e]}\colon I)$ and we follow the strategy of Katzman and Zhang \cite{CastelMumRegulDiscrefFjumpingCoeffGradedRings}. 

\begin{proposition} \label{prop: reg of quotient reg colon}
Let $I=(g_1,\ldots,g_v) \subseteq S$ be a homogeneous ideal. Suppose there is $l\geq 1$ such that \[\reg \left((I^{[p^e]}+g_iR)\right)=\mathcal{O}(p^{el})\] for all $1\leq i \leq v$, then \[\reg\left((I^{[p^e]}\colon I)\right)=\mathcal{O}(p^{el}).\]
\end{proposition}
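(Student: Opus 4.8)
The plan is to reduce the colon ideal to an intersection of single-element colons and then control each piece, and the intersection, by the standard regularity estimates for short exact sequences. First I would write
\[(I^{[p^e]}\colon I)=\bigcap_{i=1}^{v}(I^{[p^e]}\colon g_i),\]
since $I=(g_1,\dots,g_v)$. The estimates I will use repeatedly are: for a short exact sequence $0\to A\to B\to C\to 0$ of finitely generated graded $S$-modules one has $\reg(A)\le\max\{\reg(B),\reg(C)+1\}$, $\reg(B)\le\max\{\reg(A),\reg(C)\}$ and $\reg(C)\le\max\{\reg(A)-1,\reg(B)\}$, together with $\reg(M(-d))=\reg(M)+d$ and $\reg(\mathfrak{b})=\reg(S/\mathfrak{b})+1$ for a homogeneous ideal $\mathfrak{b}$.

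Second, for each generator $g_i$, of degree $\delta_i$, multiplication by $g_i$ produces the short exact sequence
\[0\to\bigl(S/(I^{[p^e]}\colon g_i)\bigr)(-\delta_i)\xrightarrow{\ \cdot g_i\ }S/I^{[p^e]}\to S/(I^{[p^e]}+g_iS)\to 0,\]
whose kernel is exactly $(I^{[p^e]}\colon g_i)$ and whose cokernel is $S/(I^{[p^e]}+g_iS)$. Applying the estimates above gives
\[\reg\bigl(S/(I^{[p^e]}\colon g_i)\bigr)\le\max\bigl\{\reg(S/I^{[p^e]}),\ \reg\bigl(S/(I^{[p^e]}+g_iS)\bigr)+1\bigr\},\]
so, using the hypothesis $\reg(I^{[p^e]}+g_iS)=\mathcal{O}(p^{el})$, each single-element colon is controlled once $\reg(S/I^{[p^e]})$ is. To bound the latter I would run the analogous multiplication-by-$g_i^{p^e}$ sequences as an induction that peels the Frobenius powers off one generator at a time, writing $I^{[p^e]}=(g_1^{p^e},\dots,g_{v-1}^{p^e})+(g_v^{p^e})$; the base case $v=1$ is the principal ideal $(g_1^{p^e-1})$, with $\reg=(p^e-1)\delta_1=\mathcal{O}(p^e)$, and the inductive step feeds on the hypothesized bounds for the ideals $I^{[p^e]}+g_iS$. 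The aim is to show $\reg(S/I^{[p^e]})=\mathcal{O}(p^{el})$ with the same exponent $l$.

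Third, I would assemble the intersection. Setting $I_j=(g_1,\dots,g_j)$, so that $(I^{[p^e]}\colon I_j)=(I^{[p^e]}\colon I_{j-1})\cap(I^{[p^e]}\colon g_j)$, I would iterate the Mayer--Vietoris sequence
\[0\to S/(I^{[p^e]}\colon I_j)\to S/(I^{[p^e]}\colon I_{j-1})\oplus S/(I^{[p^e]}\colon g_j)\to S/\bigl((I^{[p^e]}\colon I_{j-1})+(I^{[p^e]}\colon g_j)\bigr)\to 0,\]
so that at each stage $\reg(I^{[p^e]}\colon I_j)$ is bounded by the maximum of the regularities already controlled and of the intermediate sum $(I^{[p^e]}\colon I_{j-1})+(I^{[p^e]}\colon g_j)$, raised by one. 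Equivalently, one may use that multiplication by $g_j$ embeds the quotient $(I^{[p^e]}\colon I_{j-1})/(I^{[p^e]}\colon I_j)$ into $(I^{[p^e]}+g_jS)/I^{[p^e]}$ shifted by $\delta_j$. Carrying the bound $\mathcal{O}(p^{el})$ through all $v$ steps yields $\reg(I^{[p^e]}\colon I)=\mathcal{O}(p^{el})$.

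The hard part will be the third step: regularity is badly behaved under intersections and passage to submodules, so bounding the regularity of the intermediate sum ideals $(I^{[p^e]}\colon I_{j-1})+(I^{[p^e]}\colon g_j)$, equivalently of the submodules $M_j\hookrightarrow (I^{[p^e]}+g_jS)/I^{[p^e]}$, is not automatic from the hypotheses and is where the technical work of Katzman--Zhang concentrates. A secondary difficulty is pinning down $\reg(S/I^{[p^e]})$ with exponent exactly $l$, rather than the doubly-exponential exponent coming from the generic bound of Remark \ref{rmk: bound on regularity}; the point is that the Frobenius structure, exploited through the sequences above, keeps this quantity at $\mathcal{O}(p^{el})$.
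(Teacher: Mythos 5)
You have correctly located where the difficulty lives, but the two steps you flag at the end are not residual technicalities to be filled in later --- they are the entire content of the proposition, so what you have is a reduction of the statement to two statements that are at least as hard, not a proof. Concretely: (i) your second step needs $\reg(S/I^{[p^e]})=\mathcal{O}(p^{el})$, and the ``peeling'' induction you sketch for it does not close. Writing $I^{[p^e]}=K+(g_v^{p^e})$ with $K=(g_1^{p^e},\dots,g_{v-1}^{p^e})$, the relevant sequence is
\[
0\to \bigl(S/(K\colon g_v^{p^e})\bigr)(-p^e\delta_v)\xrightarrow{\ \cdot g_v^{p^e}\ } S/K\to S/I^{[p^e]}\to 0,
\]
so bounding the cokernel requires bounding $\reg\bigl(S/(K\colon g_v^{p^e})\bigr)$ --- a Frobenius-power colon ideal of exactly the type the proposition is trying to control, and one with no relation to the hypothesis ideals $I^{[p^e]}+g_iS$, which involve first powers of the $g_i$ and never arise in your induction. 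Moreover, no linear-in-$p^e$ bound for $\reg(S/I^{[p^e]})$ is known in general: the only unconditional estimate is the doubly exponential one of Remark~\ref{rmk: bound on regularity}, with exponent $2^{n-2}$ rather than $l$. Since the conclusion of the proposition keeps the same exponent $l$ as the hypothesis --- and the remark following Corollary~\ref{cor: main} cares precisely about the case $l=1$ --- any argument routed through $\reg(S/I^{[p^e]})$ proves a strictly weaker statement whenever $l<2^{n-2}$. (ii) Even granting a bound on $\reg(S/I^{[p^e]})$, your third step needs the regularity of the sums $(I^{[p^e]}\colon I_{j-1})+(I^{[p^e]}\colon g_j)$, and the regularity of a sum (equivalently, of an intersection) of two homogeneous ideals is not bounded by any function of the regularities of the summands --- sums of ideals of small regularity can have enormous regularity --- so the Mayer--Vietoris recursion has no estimate to feed on.

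For contrast, the paper's proof is a citation: it runs the proofs of Lemma~3.2 and Theorem~3.3 of Katzman--Zhang \cite{CastelMumRegulDiscrefFjumpingCoeffGradedRings} verbatim with $\mathcal{O}(p^{el})$ in place of $\mathcal{O}(p^{e})$, and that argument is cohomological rather than a regularity chase. One sets $B_k=\coker\bigl(S\xrightarrow{U_k}\bigoplus_{i=1}^{k}S(d_i)\bigr)$, where $U_k$ is the column of $g_1,\dots,g_k$, notes that $\Tor_1^S(S/I^{[p^e]},B_k)\cong (I^{[p^e]}\colon(g_1,\dots,g_k))/I^{[p^e]}$, and bounds degrees by a double induction, on the local cohomology index $j$ and on $k$, through the long exact sequences of $\h_\m^\bullet$ attached to the two short exact sequences obtained by tensoring $0\to S\to\bigoplus_i S(d_i)\to B_k\to 0$ with $S/I^{[p^e]}$. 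Your sum ideals do in fact appear inside that induction, but their local cohomology is controlled one index $j$ at a time by the inductive hypotheses together with the Frobenius structure of the whole family over $e$ --- for instance, $\h_\m^j(S/I^{[p^e]})$ is the (exact, by Kunz flatness) Frobenius functor applied to the fixed module $\h_\m^j(S/I)$, which is what makes linear growth in $p^e$ accessible for the finite-length parts --- and never by submodule or sum estimates for $\reg$. That Frobenius-cohomological engine is exactly the ingredient missing from your outline; without it, the plan is an honest restatement of the problem rather than a solution.
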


\begin{proof}
This is an immediate consequence of the proof of Lemma 3.2 and Theorem 3.3  in \cite{CastelMumRegulDiscrefFjumpingCoeffGradedRings}, where $\mathcal{O}(p^{el})$ is used instead of $\mathcal{O}(p^{e})$.
\end{proof}

\begin{corollary}
For a homogeneous ideal $I\subseteq S=k[x_1,\ldots,x_n]$ we have  \[ \reg((I^{[p^e]}\colon I))= \mathcal{O}(p^{2^{n-2}e})\]
\end{corollary}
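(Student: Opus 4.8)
The plan is to deduce the corollary from Proposition~\ref{prop: reg of quotient reg colon} by an induction on the number of variables $n$, reducing the hypothesis of the proposition (a regularity bound on the ideals $I^{[p^e]}+g_iS$) to the already-established regularity bound for colon ideals in fewer variables, and then bookkeeping the exponents carefully. The key observation is that Proposition~\ref{prop: reg of quotient reg colon} requires us to control $\reg\bigl(I^{[p^e]}+g_iS\bigr)$ for each generator $g_i$ of $I$, and each such quotient $S/(I^{[p^e]}+g_iS)$ is governed by a colon-type computation in which one of the $g_i$ has been effectively ``used up.'' This is precisely the recursive structure that produces the doubly-exponential bound $2^{n-2}$ matching the Castelnuovo--Mumford regularity bound recorded in Remark~\ref{rmk: bound on regularity}.

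First I would set up the base case. For $n \le 2$, or more precisely in the base of the recursion, the regularity of $I^{[p^e]}+g_iS$ grows at most linearly in $p^e$, i.e.\ is $\mathcal{O}(p^{e})$, because Frobenius powers raise generating degrees by a factor of $p^e$ and adding a single fixed-degree generator $g_i$ does not worsen the asymptotic rate beyond $\mathcal{O}(p^e)$; concretely one expects $\reg(I^{[p^e]}) = \mathcal{O}(p^e)$ from the linear scaling of degrees under the Frobenius, together with the regularity bound $\reg(J) \le (2\dd(J))^{2^{\dim T - 2}}$ from Remark~\ref{rmk: bound on regularity} applied to the Frobenius power. Then I would run the inductive step: assuming $\reg\bigl((I^{[p^e]}\colon I)\bigr) = \mathcal{O}(p^{2^{m-2}e})$ holds for ideals in polynomial rings of $m < n$ variables, I would feed the bound $\reg(I^{[p^e]}+g_iS) = \mathcal{O}(p^{2^{n-3}e})$ into Proposition~\ref{prop: reg of quotient reg colon} with $l = 2^{n-3}$, which yields $\reg\bigl((I^{[p^e]}\colon I)\bigr) = \mathcal{O}(p^{2^{n-3}e}\cdot p^{?})$; the doubling of the exponent at each stage, $2^{n-3} \mapsto 2^{n-2}$, is exactly the recursion that appears in the Katzman--Zhang argument of \cite{CastelMumRegulDiscrefFjumpingCoeffGradedRings}.

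The main obstacle I anticipate is getting the exponent bookkeeping exactly right so that the recursion closes with the stated value $2^{n-2}$ rather than an off-by-one or an over-count. This requires pinning down precisely how passing from $I$ to the quotients $I^{[p^e]}+g_iS$ reduces the effective number of variables (or the relevant homological dimension) by one, and verifying that the hypothesis of Proposition~\ref{prop: reg of quotient reg colon} is met with the correct value of $l$ at each level of the induction. A secondary technical point is ensuring that the $\mathcal{O}$-constants remain uniform across the finitely many generators $g_1,\dots,g_v$ and across the induction; since the number of generators and the recursion depth are both finite and independent of $e$, the implied constants can be absorbed, but this should be stated explicitly so that the asymptotic conclusion $\reg\bigl((I^{[p^e]}\colon I)\bigr) = \mathcal{O}(p^{2^{n-2}e})$ is legitimately uniform in $e$.
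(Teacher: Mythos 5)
Your proposal does not follow the paper's route, and as written it has genuine gaps that would prevent it from closing. The central misconception is where the exponent $2^{n-2}$ comes from. You attribute it to a recursion on the number of variables in which the exponent doubles at each stage ($2^{n-3}\mapsto 2^{n-2}$). But Proposition~\ref{prop: reg of quotient reg colon} has no doubling mechanism: it takes the hypothesis $\reg(I^{[p^e]}+g_iS)=\mathcal{O}(p^{el})$ and returns $\reg((I^{[p^e]}\colon I))=\mathcal{O}(p^{el})$ with the \emph{same} $l$, so feeding in $l=2^{n-3}$ can only ever output $\mathcal{O}(p^{2^{n-3}e})$ --- the ``$p^{?}$'' factor you leave unresolved does not exist. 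Moreover, the proposed reduction in the number of variables is unfounded: $I^{[p^e]}+g_iS$ is an ideal of the same ring $S$ in $n$ variables, not a colon ideal in a polynomial ring in $n-1$ variables, so there is no inductive structure to run. Finally, your base-case claim that $\reg(I^{[p^e]}+g_iS)=\mathcal{O}(p^{e})$ because generating degrees scale linearly under Frobenius is exactly the point that cannot be assumed: regularity is \emph{not} linearly bounded by generating degree (that is the whole reason the doubly exponential bound of Remark~\ref{rmk: bound on regularity} appears), and indeed the statement $\reg((I^{[p^e]}\colon I))=\mathcal{O}(p^e)$ is flagged in the paper's closing remark as an expectation that is open in general.

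The paper's actual proof is a single, non-recursive application of the regularity bound. Since $\dd(I^{[p^e]}+g_iS)\leq dp^e$ where $d=\dd(I)$, Remark~\ref{rmk: bound on regularity} gives
\[
\reg\left(I^{[p^e]}+g_iS\right)\leq (2dp^e)^{2^{n-2}}=Cp^{2^{n-2}e},
\]
i.e.\ the hypothesis of Proposition~\ref{prop: reg of quotient reg colon} holds with $l=2^{n-2}$ uniformly in $i$ (there are only finitely many generators $g_i$, so the constants are uniform). The proposition then immediately yields $\reg((I^{[p^e]}\colon I))=\mathcal{O}(p^{2^{n-2}e})$. In other words, the exponent $2^{n-2}$ is inherited in one step from the Castelnuovo--Mumford bound $\reg(J)\leq(2\dd(J))^{2^{\dim(T)-2}}$ applied to the Frobenius power, not built up by any induction on $n$; the Katzman--Zhang machinery enters only through the proof of Proposition~\ref{prop: reg of quotient reg colon} itself, which you were entitled to use as a black box.
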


\begin{proof}
Let $d$ be the generating degree of $I$, note that $\dd(I^{[p^e]}+g_iS)\leq dp^e$. By Remark \ref{rmk: bound on regularity} \[\reg\left(I^{[p^e]}+g_iS\right) \leq (2dp^e)^{2^{n-2}} = C p^{2^{n-2}e}.\]
Thus \[\reg \left((I^{[p^e]}+g_iS)\right)= \mathcal{O}(p^{2^{n-2}e}).\] 
 and Proposition \ref{prop: reg of quotient reg colon} implies 
  \[ \reg ((I^{[p^e]}\colon I)) = \mathcal{O}(p^{2^{n-2}e})\]
\end{proof}

\begin{theorem} \label{thm: main}
Let $R=S/I$. Then for any $F$-graded homogeneous family of ideals $(\a_e)_{e\geq 0}$ of bounded generating degree growth $p^l$, we have \[\exp_F(\mathcal{C}_{\a_{\bullet}})\leq \max\{l,2^{n-2}\}\dim(R). \] 
\end{theorem}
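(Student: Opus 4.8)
The plan is to reduce Theorem~\ref{thm: main} to Theorem~\ref{thm: main1}, whose hypothesis requires only that the generating degree $d_e = \dd(J_e^{\mathcal{C}_{\a_\bullet}})$ grow like $\mathcal{O}(p^{te})$ for some $t$. Once such a $t$ is produced, Theorem~\ref{thm: main1} immediately yields $\exp_F(\mathcal{C}_{\a_\bullet}) \leq t\dim(R)$, so the whole task is to identify the correct exponent $t$ and bound $d_e$ by $\mathcal{O}(p^{te})$. The Remark following the setup tells us that for $\mathcal{D} = \mathcal{C}_{\a_\bullet}$ we have the explicit description
\[
J_e^{\mathcal{C}_{\a_\bullet}} = \a_e\,(I^{[p^e]}\colon I) + I^{[p^e]},
\]
so the first step is to estimate the generating degree of a sum of three pieces: the product $\a_e\,(I^{[p^e]}\colon I)$ and the Frobenius power $I^{[p^e]}$.

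First I would recall the elementary facts that $\dd$ is subadditive under products and sums of homogeneous ideals, namely $\dd(J+K)\leq \max\{\dd(J),\dd(K)\}$ and $\dd(JK)\leq \dd(J)+\dd(K)$, and that $\dd$ is bounded above by regularity (Remark~\ref{rmk: bound on regularity}). Applying these to the formula above gives
\[
d_e = \dd(J_e^{\mathcal{C}_{\a_\bullet}}) \leq \max\bigl\{\dd(\a_e) + \dd(I^{[p^e]}\colon I),\ \dd(I^{[p^e]})\bigr\}.
\]
Now I would bound each term. By the bounded generating degree growth hypothesis, $\dd(\a_e) = \mathcal{O}(p^{le})$. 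For the colon ideal, the Corollary just proved gives $\reg((I^{[p^e]}\colon I)) = \mathcal{O}(p^{2^{n-2}e})$, hence $\dd(I^{[p^e]}\colon I) = \mathcal{O}(p^{2^{n-2}e})$. Finally $\dd(I^{[p^e]}) = p^e\,\dd(I) = \mathcal{O}(p^e)$. Combining, the dominant growth among these terms is governed by the exponent $\max\{l, 2^{n-2}\}$, so that
\[
d_e \leq \mathcal{O}\bigl(p^{\max\{l,\,2^{n-2}\}\,e}\bigr).
\]

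With $t = \max\{l, 2^{n-2}\}$ the hypothesis of Theorem~\ref{thm: main1} is verified, and that theorem delivers $\exp_F(\mathcal{C}_{\a_\bullet}) \leq t\dim(R) = \max\{l,2^{n-2}\}\dim(R)$, which is exactly the claim. The one subtlety I would want to check carefully — and what I expect to be the main obstacle — is the interaction between the sum and product in $\dd(\a_e(I^{[p^e]}\colon I) + I^{[p^e]})$: I must make sure the subadditivity bounds for $\dd$ are applied legitimately to these specific homogeneous ideals and that passing between the polynomial ring $T$, its localization $S$, and the regularity estimates is harmless, since $\dd$ was defined via the graded structure on $T$. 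Provided these standard comparisons go through (the localization at $\m$ does not change generating degrees of homogeneous ideals), the argument is a direct assembly of the Corollary, Remark~\ref{rmk: bound on regularity}, and Theorem~\ref{thm: main1}.
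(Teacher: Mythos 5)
Your proposal is correct and follows essentially the same route as the paper: invoke the identity $J_e^{\mathcal{C}_{\a_\bullet}} = \a_e(I^{[p^e]}\colon I)+I^{[p^e]}$, bound $\dd(J_e^{\mathcal{C}_{\a_\bullet}})$ by $\mathcal{O}(p^{\max\{l,2^{n-2}\}e})$ using the hypothesis on $\a_\bullet$ together with the Corollary on $\reg((I^{[p^e]}\colon I))$ and Remark~\ref{rmk: bound on regularity}, then apply Theorem~\ref{thm: main1} with $t=\max\{l,2^{n-2}\}$. If anything, your treatment is slightly more careful than the paper's, since you explicitly control the summand $I^{[p^e]}$ via $\dd(I^{[p^e]})=\mathcal{O}(p^e)$ and the subadditivity rules for $\dd$, whereas the paper absorbs it without comment.
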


\begin{proof}
Recall that \[J_e^{\mathcal{D}} = \a_e(I^{[p^e]}\colon I)+I^{[p^e]},\] and so 
\begin{align*}\dd(J_e^{\mathcal{D}}) &\leq \dd(\mathfrak{a}_e)+ \dd\left((I^{[p^e]}\colon I)\right)\\ & = \mathcal{O}(p^{le})+\mathcal{O}(p^{e(2^{n-2})}) \\ &= \mathcal{O}(p^{\max\{l,2^{n-2}\}e}) 
\end{align*} 
The result is now immediate via Theorem \ref{thm: main1}.
\end{proof}

\begin{corollary} \label{cor: main}
Let $S$ be a localization of the polynomial ring $T$ over field at its homogeneous maximal ideal and $I\subseteq T$ a homogeneous ideal.  If $R=S/I$, then $R$ has finite Frobenius complexity. 
\end{corollary}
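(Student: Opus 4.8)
The plan is to specialize Theorem~\ref{thm: main} to the total Cartier algebra $\mathcal{C}^R$ itself, recognizing that $\mathcal{C}^R$ arises as the Cartier subalgebra $\mathcal{C}_{\a_\bullet}$ attached to the trivial $F$-graded system of ideals $\a_e = R$ for all $e$. First I would verify that the constant family $\a_e = R$ is indeed an $F$-graded system of ideals: we have $\a_0 = R$ and $\a_e \a_{e'}^{[p^e]} = R \subseteq R = \a_{e+e'}$, so the defining containment holds trivially. For this family, the associated Cartier subalgebra is $\mathcal{C}_{\a_\bullet} = \bigoplus_{e \geq 0} R \cdot \Hom_R({^eR},R) = \bigoplus_{e \geq 0} \Hom_R({^eR},R) = \mathcal{C}^R$, so $\mathcal{D} = \mathcal{C}^R$ and $J_e^{\mathcal{D}} = (I^{[p^e]}\colon I) + I^{[p^e]} = (I^{[p^e]}\colon I)$, the last equality holding because $I^{[p^e]} \subseteq (I^{[p^e]}\colon I)$ always.

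Next I would check the bounded-generating-degree-growth hypothesis for this constant family. Since each $\a_e = R = (1)$ is generated in degree $0$, we have $\dd(\a_e) = 0 = \mathcal{O}(p^{le})$ for $l = 0$; thus the family has bounded generating degree growth $p^0 = 1$. With $l = 0$, Theorem~\ref{thm: main} applies directly and yields
\[
\exp_F(\mathcal{C}^R) \leq \max\{0, 2^{n-2}\}\dim(R) = 2^{n-2}\dim(R),
\]
which is a finite number.

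Finally I would translate this bound on the Frobenius exponent into the assertion about Frobenius complexity. By the remark following Definition~\ref{exp}, the Frobenius complexity $\cx_F(R)$ of the local ring $R$ equals $\exp_F(\mathcal{C}^R)$ (after passing to the completion, under which the complexity sequence is invariant, as noted in the excerpt). Since $\exp_F(\mathcal{C}^R) = \log_p\big(\cx(\mathcal{C}^R)\big)$ is bounded above by the finite quantity $2^{n-2}\dim(R)$, the complexity $\cx(\mathcal{C}^R)$ is finite, and hence the Frobenius complexity of $R$ is finite. This is exactly Theorem~A from the introduction.

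I do not anticipate a serious obstacle here, since the corollary is essentially a direct specialization: the only points requiring a word of care are confirming that the total Cartier algebra really is captured by the trivial system of ideals (so that $J_e^{\mathcal{C}^R} = (I^{[p^e]}\colon I)$ and the corollary of Proposition~\ref{prop: reg of quotient reg colon} governing $\reg((I^{[p^e]}\colon I)) = \mathcal{O}(p^{2^{n-2}e})$ enters through Theorem~\ref{thm: main}), and invoking the completion-invariance of the complexity sequence to identify $\cx_F(R)$ with $\exp_F(\mathcal{C}^R)$ in the possibly non-complete graded-local setting.
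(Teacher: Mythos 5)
Your proposal is correct and takes essentially the same route as the paper: the paper's own proof likewise obtains the corollary by specializing Theorem~\ref{thm: main} to the trivial choice $\mathcal{C}_{\a_\bullet}=\mathcal{C}^{R}$ and invoking the completion-invariance of the Frobenius complexity (Definition~\ref{exp} and the remark following it). Your write-up simply makes explicit the details the paper leaves implicit, namely that the constant family $\a_e=R$ has generating degree growth $p^0$ and that $J_e^{\mathcal{D}}=(I^{[p^e]}\colon I)$.
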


\begin{proof}
This is a consequence of Theorem \ref{thm: main} by taking $\mathcal{C}_{a_\bullet}=\mathcal{C}^{R}$ and noting that the Frobenius complexity is invariant under completion; see also Definition~\ref{exp}.
\end{proof}

\begin{remark}
It is expected that $\reg((I^{[p^e]}\colon I))=\mathcal{O}(p^{e})$; this would imply that the Frobenius complexity of a ring is less than its dimension. In \cite[Corollary 3.6]{CastelMumRegulDiscrefFjumpingCoeffGradedRings} it is shown that this is the case if $\dim\left(\text{Sing}(R/g_iR)\cap \text{V}(I)\right)\leq 1$ for all $i$. 
\end{remark}

\begin{example}
This result is far from optimal. Consider for example $R=\mathbb{F}_p[x_1,\ldots,x_n]$. In this case the splitting dimension is $n$, but the Frobenius complexity is $-\infty$. 
\end{example}

\begin{example}
It is shown in \cite{EnescuYao} that, if $S_3$ is the completion of the determinantal ring obtained by the $2\times 2$ minors of a $3\times 2$ matrix at the homogenous maximal ideal, then \[\cx_p(S_3)=1+\log_p(p+1)-\log_p 2.\] Note that $\dim(S_3)=3$.
\end{example}

\excise{

\subsection{The proof of Proposition \ref{prop: reg of quotient reg colon}}

In this section we prove proposition \ref{prop: reg of quotient reg colon}. Note that the proof is identical to the arguments in \cite{???}, but we reproduce them here for the reader's sake.

We set $S=k[x_1,\ldots,x_n]$ and we fix homogeneous elements $u_1,\ldots,u_N\in S$ of degrees $d_1,\ldots,d_N$. For $1\leq k \leq N$ let \[U_k=\begin{pmatrix}
u_1 \\ \vdots \\u_k 
\end{pmatrix} \in S^k.\]

Consider for each $1 \leq k \leq N$ the graded $S$-module $B_k$ defined  as the cokernel of the graded free resolution 
\[
0\xrightarrow{} S \xrightarrow{U_k} \bigoplus_{i=1}^k S(d_i). 
\]

We use this resolution to compute $T_{kj}^e=\Tor^S_j(S/I^{[p^e]},B_k)$. 
We can use this free resolution to compute $T^e_{kj}$. So we get  exact sequences \[0\to T_{k1}^e \to \frac{S}{I^{[p^e]}} \to C_k \to 0\] and \[0\to C_k \to \bigoplus_{i=1}^k \frac{S}{I^{[p^e]}}(d_i) \to T^e_{k0} \to 0.\]

These induce long exact sequences 
\begin{equation} \label{eq: A}
\ldots \to \h^j_{\m}\left( T_{k1}^e \right) \to \h^j_{\m}\left(\frac{S}{I^{[p^e]}}\right) \to \h^j_{\m}\left(C_k\right) \to \ldots
\end{equation} 
and
 \begin{equation} \label{eq:B}
 \ldots \to \h^j_{\m}\left(C_k\right) \to \bigoplus_{i=1}^k  \h^j_{\m}\left(\frac{S}{I^{[p^e]}}\right)(d_i) \to  \h^j_{\m}\left(T^e_{k0}\right) \to \ldots. 
 \end{equation} 

In particular we have 
\begin{equation} \label{eq:C}
0\to \h^0_{\m}\left(C_k\right) \to \bigoplus_{i=1}^k  \h^0_{\m}\left(\frac{S}{I^{[p^e]}}\right)(d_i)
\end{equation}
which implies that $\reg(\h^0_\m(C_k))=\mathcal{O}(p^e)$, and from 
\begin{equation} \label{eq:D}
\ldots \to \h^0_{\m}\left(C_k\right) \to \h^1_{\m}\left( T_{k1}^e \right) \to \h^1_{\m}\left(\frac{S}{I^{[p^e]}}\right) \to  \ldots
\end{equation} 
we also get $\reg(T_{k1}^e)=\mathcal{O}(p^e)$.

\begin{proposition} \label{prop: T_k1 = colon} Let $J_k=(u_1,\ldots,u_k)S$, note that $T_{k1}^e=(I^{[p^e]}\colon J_k)/ I^{[p^e]}$ 
\end{proposition}
\begin{proof}
From the free resolution for $B_k$ we get and exact sequence \[0\to T_{k1} \to \frac{S}{I^{[p^e]}} \xrightarrow{U_k} \bigoplus_{i=1}^k \frac{S}{I^{[p^e]}}(d_i)\] which clearly implies the result.
\end{proof}

The previous proposition together with the exact sequence 
\[ 0  \to \frac{(I^{[p^e]}\colon J_k)}{I^{[p^e]}} \to \frac{S}{I^{[p^e]}} \to \frac{S}{(I^{[p^e]}\colon J_k)} \to 0,\] 
implies $C_k=S/(I^{[p^e]}\colon J_k)$. Furthermore, \ref{eq:D} gives 
\[\h^0_{\m}(T_{k0}^e)\to  \h^1_{\m}\left( C_k \right) \to \h^1_{\m}\left(\frac{S}{I^{[p^e]}}\right) \to \ldots,\]
but as $T^e_{k0}=B_k/I^{[p^e]}B_k$, we deduce 
\[\reg \left( \h_{\m}^1\left( \frac{S}{(I^{[p^e]}\colon J_k)}\right)\right)=\mathcal{O}(p^e).\]

\begin{lemma}
Let $A=S/I^{[p^e]}$. If there exists $l>0$ such that \[\reg\left(\frac{S}{(I^{[p^e]}+u_iS)}\right) =\mathcal{O}(p^{el})\] for all $0\leq i \leq N$, then 
\[ \reg \h^j_\m\left( \frac{\bigoplus_{i=1}^k A(d_i)}{AU_k}\right) = \mathcal{O}(p^{le})\] for all $1\leq k \leq N$ and all $j\geq 0$.
\end{lemma}
 
\begin{proof}
We will use induction on $j$. 

\emph{Case $j=0$}:  We do induction on $k$. The case $k=1$ is trivial form the hypothesis, so let's assume we know the result for values less than $k$. From 
\begin{equation} \label{eq:E}
0\to \frac{AU_{k-1}\oplus Au_k}{AU_k}\to \frac{\bigoplus_{i=1}^k A(d_i)}{AU_k}\to \frac{\bigoplus_{i=1}^{k-1} A(d_i)}{AU_k-1} \oplus \frac{A}{Au_k}(d_k)\to 0, 
\end{equation}
and the inductive hypothesis, we find it is enough to show 
\[\reg \h_{\m}^0\left(\frac{AU_{k-1}\oplus Au_k}{AU_k}\right)=\mathcal{O}(p^{el}).\] 
Consider the graded map 
\[\phi: A \to \frac{AU_{k-1}\oplus Au_k}{AU_k}\]
that send $a$ to the image of $aU_{k-1}\oplus 0$. Then $\phi$ is surjective and has kernel 
\[\ker \phi = (I^{[p^e]}\colon J_{k-1})A + (I^{[p^e]}\colon u_k)A.\] 
Hence, we need 
\[\reg \h_{\m}^0\left(\frac{A}{(I^{[p^e]}\colon J_{k-1})A + (I^{[p^e]}\colon u_k)A}\right)=\mathcal{O}(p^{el}).\]
From the exact sequence  
\[0\to \frac{A}{(I^{[p^e]}\colon J_{k})A } \to  \frac{A}{(I^{[p^e]}\colon J_{k-1})A} \oplus  \frac{A}{(I^{[p^e]}\colon u_{k})A} \to \frac{A}{(I^{[p^e]}\colon J_{k-1})A + (I^{[p^e]}\colon u_k)A} \to 0\] 
we have 

\begin{multline} \label{eq:F}
\ldots \to \h_{\m}^i\left( \frac{A}{(I^{[p^e]}\colon J_{k-1})A}\right) \oplus \h_{\m}^i\left( \frac{A}{(I^{[p^e]}\colon u_{k})A} \right) \\ \to \h_{\m}^i\left(\frac{A}{(I^{[p^e]}\colon J_{k-1})A + (I^{[p^e]}\colon u_k)A}\right) \to \h_{\m}^{i+1}\left(\frac{A}{(I^{[p^e]}\colon J_{k})A } \right) \to\ldots 
\end{multline}
Recall that by the discussion following proposition \ref{prop: T_k1 = colon} we have \[ \h_{\m}^{1}\left(\frac{A}{(I^{[p^e]}\colon J_{k})A } \right)= \h_{\m}^{1}\left(\frac{S}{(I^{[p^e]}\colon J_{k}) } \right)=\mathcal{O}(p^{el}).\]  Hence, by taking $i=0$ in \ref{eq:F}, the induction hypothesis imply the result for $j=0$.

\emph{Case j>0}: Suppose that for all $t<j$ we know  \[ \reg \h^t_\m\left( \frac{\bigoplus_{i=1}^k A(d_i)}{AU_k}\right) = \mathcal{O}(p^{le}).\]
From the exact sequence \ref{eq:B} we deduce \[ \reg \h^\gamma _\m(C_k)=\mathcal{O}(p^e)\]
for all $\gamma\leq j$, and all $k\leq N$. And, from \ref{eq: A} we get  
\[ \reg \h^\delta _\m(T_{k1}^e)=\mathcal{O}(p^{el})\]
for all $0\leq \delta \leq j+1$ and for all $1\leq k \leq N$, i.e. 
\[ \reg \h^\delta _\m \left(\frac{R}{(I^{[p^e]}\colon J_k)}\right)=\mathcal{O}(p^{el}).\] 

We now prove \[ \reg \h^j_\m\left( \frac{\bigoplus_{i=1}^k A(d_i)}{AU_k}\right) = \mathcal{O}(p^{le}),\]
by induction on $k$. When $k=1$ this is the hypothesis, which we apply to sequence \ref{eq:E} to get 
\[\reg \h_{\m}^j\left(\frac{AU_{k-1}\oplus Au_k}{AU_k}\right)=\mathcal{O}(p^{el}).\] The result follows from the sequence \ref{eq:F}.
\end{proof}

\begin{proposition}
If $\reg (S/(I^{[p^e]}+g_iS))=\mathcal{O}(p^{el})$ for all $1\leq i \leq \vee$, then \[\reg\left((I^{[p^e]}\colon I)\right)=\mathcal{O}(p^{el}).\]
\end{proposition}

\begin{proof}
Applying the previous lemma with $N=\vee$ and $u_k=g_k$ we deduce that \[ \reg (T_1^e) = \reg \left( \frac{(I^{[p^e]}\colon I)}{I^{[p^e]}}\right) =\mathcal{O}(p^{el}).\] The result follows from the exact sequence \[ 0 \to I^{[p^e]} \to (I^{[p^e]}\colon I) \to \frac{(I^{[p^e]}\colon I)}{I^{[p^e]}}\to 0\]
\end{proof}
}

\bibliographystyle{alpha}
\bibliography{References}

\end{document}